\keywords{ Grassmannian, periods domain, infinitesimal deformation, adjoint forms, spectral sequence} 
\subjclass{14C34, 14D07, 14J10, 14J40, 14J70, 14M15.}
\theoremstyle{plain}
\newtheorem{thm}{Theorem}[subsection]
\newtheorem{prop}[thm]{Proposition}
\newtheorem{lem}[thm]{Lemma}
\theoremstyle{definition}
\newtheorem{defn}[thm]{Definition}
\newtheorem{rmk}[thm]{Remark}
\newcommand{\sB}{\mathcal{B}}
\newcommand{\sE}{\mathcal{E}}
\newcommand{\sF}{\mathcal{F}}
\newcommand{\sG}{\mathcal{G}}
\newcommand{\sI}{\mathcal{I}}
\newcommand{\sJ}{\mathcal{J}}
\newcommand{\sL}{\mathcal{L}}
\newcommand{\sO}{\mathcal{O}}
\newcommand{\sP}{\mathcal{P}}
\newcommand{\sR}{\mathcal{R}}
\newcommand{\sX}{\mathcal{X}}
\newcommand{\mC}{\mathbb{C}}
\newcommand{\mG}{\mathbb{G}}
\newcommand{\mP}{\mathbb{P}}
\newcommand{\Ima}{\mathrm{Im}\,}
\newcommand{\Ker}{\mathrm{Ker}\,}
\numberwithin{equation}{section}
\newcommand{\beba}  {\begin{equation}\begin{array}{rcl}}
\newcommand{\eaee}  {\end{array}\end{equation}}
\let\oldtocsection=\tocsection
\let\oldtocsubsection=\tocsubsection
\let\oldtocsubsubsection=\tocsubsubsection
\renewcommand{\tocsection}[2]{\hspace{0em}\oldtocsection{#1}{#2}}
\renewcommand{\tocsubsection}[2]{\hspace{1em}\oldtocsubsection{#1}{#2}}
\renewcommand{\tocsubsubsection}[2]{\hspace{2em}\oldtocsubsubsection{#1}{#2}}
\title{On Green's proof of infinitesimal Torelli theorem for hypersurfaces}
\author{Luca Rizzi}
\address{D.I.M.I. \\
the University of Udine\\
Udine, 33100 Italy\\
\texttt{rizzi.luca@spes.uniud.it}}
\author{Francesco Zucconi}
\address{D.I.M.I. \\
the University of Udine\\
Udine, 33100 Italy\\
\texttt{Francesco.Zucconi@dimi.uniud.it}}
\begin{document}

\markboth{Rizzi and Zucconi}{A Torelli Theorem for Grassmannian hypersurfaces}

\begin{abstract} We prove an equivalence between the infinitesimal Torelli theorem for top forms on a hypersurface contained inside a Grassmannian $\mathbb G$ and the theory of adjoint volume forms presented in \cite{RZ2}. More precisely,
%We show that for any smooth divisor $X$ contained inside a Grassmannian $\mathbb G$ and of general type or of Calabi Yau type the differential $d\sP_{X}$ of the period map is injective. 
via this theory and a suitable generalization of Macaulay's theorem we show that the differential of the period map vanishes on an infinitesimal deformation if and only if certain 
explicitly given twisted  volume forms go in the generalized Jacobi ideal of $X$ via the cup product homomorphism.
 \end{abstract}

\maketitle
\tableofcontents

\section{Introduction} Let $\sL$ be a line bundle over a smooth variety $Y$ and $X\subset Y$ the zero locus of  a global section $\sigma\colon Y\to\sL$. 
To study the infinitesimal deformations of $X$, M. L. Green introduced the notion of {\it{pseudo-Jacobi ideal}} $\sJ_{\sL,\sigma}$ \cite[Formula (2.11) page 144] {green1}.
 In particular the quotient $R_{L,\sigma}:= H^{0}(X,\sL)/\sJ_{\sL,\sigma}$ coincides with the tangent space of the Kuranishi family of (embedded) deformations and it is a piece of a graded ring 
 $\sR:=\oplus_{m\geq 0}R_{\sL^{\otimes m},\sigma}$, the so called {\it{generalized Jacobi ring}}; a notion that gives back the standard Jacobian ring if $Y$ is a projective space and $X=(F=0)$ is a smooth hypersurface. Following the fundamental papers by Griffiths, see \cite{Gri1}, in \cite{green1} it is proved that if $X$ is sufficiently ample the infinitesimally Torelli theorem holds for $X$, that is $d\sP_{X}$ is injective, and it is also possible to associate to the couple $(Y,X)$ a multiplicative structure on $\sR$ which is a perfect pairing.
  
 Now suppose that ${\rm{Pic}}(Y)=[H]\cdot \mathbb Z$ where $H$ is an effective divisor. Hence for any effective divisor $X\subset Y$ there exists a unique $m\in\mathbb N_{\geq 0}$ such that $X$ is an element of the linear system $|mH|$. Thus a natural problem is to find the minimum $m\in\mathbb N$ such that the multiplicative structure on $\sR$ gives a perfect pairing and $d\sP_{X}$ is injective where $X$ is a smooth element of $|mH|$; as far as we know the problem has been fully solved only in the case where $Y$ is a projective space, see also: \cite{Do} and  c.f. \cite{RZ2}, and in the case of K\"ahler C-spaces; see: \cite{K}. In this work we consider the case of Grassmannians, which is a particular case of \cite{K}, but we give a criterion to check if a local family is trivial, in terms of the geometry of certain top forms; the reader will realize that the ampleness degree in Green's proof can be clarified for many other important ambient spaces using our method. 

Let $Y=\mathbb G:= G(s,l+1)$ be the Grassmanniann of $s$-planes in $\mathbb C^{l+1}$ where $1<s<l$ and $l\geq 3$. Let $X\subset\mathbb G$ be an effective divisor. By Lefschetz theorem we know that if $\sO_{\mathbb G}(1)$ is the invertible sheaf which gives the Pl\"ukher embedding and $H$ is an hyperplane section, then $X$ is the zero locus of a global section $\sigma$ of $|aH|$ where $a\in \mathbb N_{>0}$. As a consequence of \cite{K} we know that, if $a\geq 3$, $d\sP_{X}$ is injective. In this paper we only consider the case where $X$ is of general type or of Calabi Yau type. Once we know that the infinitesimal Torelli theorem holds, a basic problem stands out if we consider the embedding $X\subset\mathbb G$. Indeed if $X=(\sigma=0)$, $\sigma\in H^{0}(\mathbb G,\sO_{\mathbb G}(a))$, any infinitesimal deformation is induced by a local family $(\sigma+\epsilon\tau=0)$ where $\tau\in H^{0}(\mathbb G,\sO_{\mathbb G}(a))$; see: Proposition \ref{etutto}. Hence it would be useful to have criteria to check which of these families actually induce the trivial deformation on $X$. The theory of generalized adjoint forms is a tool to solve this problem.  

Let us briefly recall the notion of generalized adjoint forms. The details of the general theory are discussed in \cite{RZ2}; here we recall that this theory has been successfully used in \cite{BAN}, \cite{Ra}, 
\cite{CNP}, \cite{G}, \cite{RZ3}, and that the foundations of the theory  of adjoint forms in dimension $\geq 2$ are in \cite{PZ}; see also \cite{RZ1} and \cite{R}.

In our case we twist by $\sO_X(2)$ the exact sequence associated to the infinitesimal deformation $\xi\in H^1(X,\Theta_X)$ to obtain
\begin{equation}
\label{estensionedivisoreduedue}
0\to\sO_X(2)\to \Omega^1_{\sX|X}(2)\to\Omega^1_X(2)\to0.
\end{equation}
The cup-product homomorphism $\partial_{\xi}\colon H^0(X,\Omega^1_X(2))\to H^1(X,\sO_X(2))$
 is trivial since $H^1(X,\sO_X(2))=0$. Now take a generic $n+1$-dimensional vector space $W<H^0(X,\Omega^1_X(2))$ and denote by $\lambda^{i}W$ the image of $\bigwedge^iW$ through the natural homomorphism $\lambda^i\colon\bigwedge^i H^0(X,\Omega^1_X(2))\to  H^0(X,\bigwedge^{i}(\Omega^1_X(2)))$.
Let $\sB:=\langle\eta_{1},\ldots,\eta_{n+1}\rangle$ be a basis of $W$ and $s_1,\ldots, s_{n+1}\in H^0(X, \Omega^1_{\sX|X}(2))$ liftings of, respectively,  $\eta_{1},\ldots,\eta_{n+1}$, then the map 
$$\Lambda^{n+1}\colon\bigwedge^{n+1} H^0(X, \Omega^1_{\sX|X}(2))\to H^0(X,  {\rm{det}}(\Omega^1_{\sX|X}(2)))$$ gives the twisted volume form $ \Omega:=\Lambda^{n+1}(s_1\wedge s_2 \wedge\ldots\wedge s_{n+1})\in H^0(X,{\rm{det}}( \Omega^1_{\sX|X}(2)))$ which is called {\it{generalized adjoint form associated to}} $\xi$, $W$, and $\sB$. On the other hand we also have $n+1$ top forms of $\Omega^1_X(2)$. Indeed consider the $n+1$ elements $\omega_i:=\lambda^{n}(\eta_1\wedge\ldots\wedge\eta_{i-1}\wedge{\widehat{\eta_i}}\wedge\eta_{i+1}\wedge\ldots\wedge\eta_{n+1})$, $i=1,\ldots,n+1$ obtained by the basis 
 $\langle \eta_1\wedge\ldots\wedge\eta_{i-1}\wedge{\widehat{\eta_i}}\wedge\eta_{i+1}\wedge\ldots\wedge\eta_{n+1}\rangle_{i=1}^{n+1}$ of $\bigwedge^n W$. By the the sequence (\ref{estensionedivisoreduedue}), 
 ${\rm{det}}(\Omega^1_{\sX|X}(2))={\rm{det}}(\Omega^1_{X}(2))\otimes_{\sO_{X}}\sO_{X}(2)$, and we can construct an obvious homomorphism:
\begin{equation}
\label{aggiuntasequenzaaa}
H^0(X,\sO_X(2))\otimes \lambda^nW \to H^0(X, {\rm{det}}(\Omega^1_{\sX|X}(2))\end{equation} 

The Generalized Adjoint Theorem, see \ref{theoremA} (and the therein quoted bibliography), fully characterizes  the condition $\Omega \in \Ima(H^0(X,\sO_X(2)\otimes \lambda^nW \to H^0(X,{\rm{det}}(\Omega^1_{\sX|X}(2))$; but the important point is that to check this condition is tantamount to check if $d\sP_{X}(\xi)=0$. More deeply, we can construct an explicit space of generalized adjoint forms associated to $\xi$ in the following way.
First we lift the sections $\eta_i$'s from $H^0(X,\Omega^1_X(2))$ to $H^0(\mathbb G,\Omega^1_\mathbb G(2))$, then we take the wedge product to obtain a twisted volume form $\widetilde{\Omega}$, independent from $\xi$, such that 
$\widetilde{\Omega}\in H^0(\mG,\Omega_\mG^N(2N))$. Finally by taking the form $R\in H^0(\mG,\sO_\mG(a))$ which induces the deformation $\xi$, we consider the class $R\cdot{\widetilde{\Omega}}\in  H^0(\mG,\Omega_\mG^N(2N+a))$ which by adjunction restricts to $\Omega\in H^0(X,\Omega^{N-1}_X(2N))=H^0(X,{\rm{det}}(\Omega^1_{X}(2))\otimes_{\sO_{X}}\sO_{X}(2))$. We show:
\medskip

\noindent
{\bf{Main Theorem}}
{\it{Let $a>l$ and let $(\sigma=0)=X\in |aH|$ as above. The following are equivalent:
\begin{itemize}
	\item[\textit{i})] the differential of the period map $d\sP_X$ is zero on the infinitesimal deformation $\xi$ induced by $R\in H^0(\mG,\sO_\mG(a))$;
	\item[\textit{ii})] $R$ is an element of the pseudo-Jacobi ideal $\sJ_{\sO_\mG(a),\sigma}$;
	\item[\textit{iii})]  for a generic $\xi$-adjoint $\Omega$ it holds $\Omega\in \Ima H^0(X,\sO_X(2))\otimes \lambda^nW\to H^0(X,\Omega^{N-1}_X(2N))$;
	\item[\textit{iv})] if $\widetilde{\Omega}\in H^0(\mG,\Omega_\mG^N(2N))$ restricts to a generalized adjoint form then $R\widetilde{\Omega}\in \sJ_{\Omega_\mG^N(2N+a),\sigma}$.
\end{itemize}
}}
\medskip

This last part of our theory relies, via our suitable generalization of the Macaulay's theorem (see: Theorem \ref {macaulaygen} and Theorem \ref{macaulaygenc} below), on a perfect equivalence between the infinitesimal theory of periods and our theory of adjoint forms 
which will be explored in the more general context of rational homogeneous varieties in a forthcoming paper.

\section[Sufficiently ample hypersurfaces]{Review on the theory of adjoint forms}
\label{sezione0}
\subsection{Definition of generalized adjoint form} The theory of generalized adjoint is built in \cite{RZ2}. Here we recall only the basic notions we need.

Let $X$ and $\xi$ be respectively a smooth compact complex variety of dimension $m$ and a class $\xi\in \text{Ext}^1(\sF,\sL)$ where 
$\sF$ and $\sL$ are two locally free sheaves on $X$ of rank $n$ and $1$ respectively. Then the extension class $\xi$ gives a rank $n+1$ vector bundle $\sE$ on $X$ which fits in an exact sequence:
\begin{equation}
\label{sequenza}
0\to\sL\to \sE\to\sF\to 0
\end{equation}

By wedge-sequences naturally associated to the sequence (\ref{sequenza}) we find that the invertible sheaf $\det\sF:=\bigwedge^n\sF$ fits into the exact sequence:
\begin{equation}
\label{wedge}
0\to\bigwedge^{n-1}\sF\otimes\sL\to \bigwedge^n\sE\to \det\sF\to 0, 
\end{equation} which still corresponds to $\xi$ under the isomorphism $\text{Ext}^1(\sF,\sL)\cong\text{Ext}^1(\det\sF,\bigwedge^{n-1}\sF\otimes\sL)\cong H^1(X,\sF^\vee\otimes\sL)$.

 A natural problem is to find conditions on the behavior of the global sections of the involved vector bundles in order to have the splitting of (\ref{sequenza}). From now on, assume that the connecting homomorphism
 $\partial_\xi \colon H^0(X,\sF)\to H^1(X,\sL)$ has a kernel of dimension sufficiently hight. More precisely assume that there exists a subset $W\subset \ker(\partial_\xi)$ of dimension $n+1$. Choose a basis $\mathcal{B}:=\{\eta_1,\ldots,\eta_{n+1}\}$ of $W$. By definition we can take liftings $s_1,\ldots,s_{n+1}\in H^0(X,\sE)$ of the sections $\eta_1,\ldots,\eta_{n+1}$. If we consider the natural map
\begin{equation*} 
\Lambda^n\colon \bigwedge^{n}H^0(X,\sE)\to H^0(X,\bigwedge^n\sE)
\end{equation*} we can define the sections
\begin{equation}
\label{Omegai}
\Omega_i:=\Lambda^n(s_1\wedge\ldots\wedge\hat{s_i}\wedge\ldots\wedge s_{n+1})
\end{equation} for $i=1,\ldots,n+1$. Denote by $\omega_i$, for $i=1,\ldots,n+1$, the corresponding sections in $H^0(X,\det\sF)$. By commutativity between evaluation of wedge product and restriction it easily follows that $\omega_i=\lambda^n(\eta_1\wedge\ldots\wedge\hat{\eta_i}\wedge\ldots\wedge \eta_{n+1})$, where $\lambda^n$ is the natural morphism 
$$\lambda^n\colon \bigwedge^{n}H^0(X,\sF)\to H^0(X,\det\sF).$$
\begin{defn} We denote by $\lambda^nW$ the vector subspace of $H^0(X,\det\sF)$ generated by $\omega_1,\ldots,\omega_{n+1}$.
If $\lambda^nW$ is nontrivial, the induced sublinear system $|\lambda^n W|\subset \mP(H^0(X,\det\sF))$ is called \emph{adjoint sublinear system of $W$}. We call $D_W$ {\it{its fixed divisor}} 
and $Z_W$ {\it{the base locus of its moving part}} $|M_W|\subset\mP(H^0(X,\det\mathcal{F}(-D_W)))$.
\end{defn}

\begin{defn} The form $\Omega\in H^0(X,\det\sE)$ corresponding to $s_1\wedge\ldots\wedge s_{n+1}$ via 
\begin{equation}
\Lambda^{n+1}\colon \bigwedge^{n+1}H^0(X,\sE)\to H^0(X,\det\sE)
\end{equation} is called {\it{ a generalized adjoint form of W}}.
\end{defn} 
\begin{rmk}
\label{zeri}
It is easy to see by local computation that this section is in the image of the natural injection $\det\sE(-D_W)\otimes\sI_{Z_W}\to \det\sE$.
\end{rmk}

The basic idea of the theory of adjoint forms is that, in a split exact sequence, generalized adjoint forms as $\Omega$ do not add any information which is not already given by the top forms $\omega_i\in H^{0}(X, \det\sF) $, $i=1,\dots,n+1$. More precisely, since ${\rm{det}}(\sE)={\rm{det}}(\sF)\otimes\sL$, everything is reduced to check the condition

\begin{equation}
\label{aggiuntazero2}
\Omega \in \Ima(H^0(X,\sL)\otimes \lambda^nW \to H^0(X,\det\sE)).
\end{equation}   By \cite[{Theorem [A]} and {Theorem [B]}]{RZ2} we have:
\begin{thm}\label{theoremA}
Let $\Omega\in H^0(X,\det\sE)$ be a generalized adjoint form associated to $W$ as above.
If $\Omega \in \Ima(H^0(X,\sL)\otimes \lambda^nW\to H^0(X,\det\sE))$ 
then $\xi\in\ker(H^1(X,\sF^\vee\otimes\sL)\to H^1(X,\sF^\vee\otimes\sL(D_W)))$. Viceversa. If $H^0(X,\sL)\cong H^0(X,\sL(D_W))$ and if 
 $\xi\in\ker(H^1(X,\sF^\vee\otimes\sL)\to H^1(X,\sF^\vee\otimes\sL(D_W)))$, then $\Omega \in \Ima(H^0(X,\sL)\otimes \lambda^nW\to H^0(X,\det\sE))$.
\end{thm}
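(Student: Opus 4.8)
The plan is to recast the membership $\Omega\in\Ima(H^0(X,\sL)\otimes\lambda^nW\to H^0(X,\det\sE))$ as the vanishing of a well-defined class, and to bridge it to $\xi$ through the pushout of (\ref{sequenza}) along $\sL\hookrightarrow\sL(D_W)$. First I would record the \emph{lifting-independence} of $\Omega$ modulo the image: the global liftings $s_i\in H^0(X,\sE)$ exist since $W\subset\ker(\partial_\xi)$, and replacing $s_i$ by $s_i-\iota(l_i)$ with $l_i\in H^0(X,\sL)$ and $\iota\colon\sL\to\sE$ changes $\Lambda^{n+1}(s_1\wedge\cdots\wedge s_{n+1})$ by $\pm\, l_i\omega_i$ (the terms containing two sections of $\sL$ die, since $\sL$ has rank one). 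Hence the class $[\Omega]$ in $\mathrm{coker}\big(H^0(X,\sL)\otimes\lambda^nW\to H^0(X,\det\sE)\big)$ depends only on $\xi$ and $W$. Using $\det\sE=\det\sF\otimes\sL$ and Remark \ref{zeri}, both $\Omega$ and every $\omega_i$ vanish on $D_W$, and the residual base locus $Z_W$ has $\codim Z_W\geq2$; these are the two loci across which extensions will later have to be controlled. Throughout I use that $X$ is integral, so multiplication by the section $\sigma_{D_W}$ cutting $D_W$ is injective on global sections.

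For the converse (the \textit{viceversa}), assume $H^0(X,\sL)\cong H^0(X,\sL(D_W))$ and that $\xi$ dies in $H^1(X,\sF^\vee\otimes\sL(D_W))$. Then the pushout sequence $0\to\sL(D_W)\to\sE'\to\sF\to0$, whose class is the image of $\xi$, splits; let $\rho\colon\sF\to\sE'$ be a splitting. The pushout map $\phi\colon\sE\to\sE'$ sends $s_i$ to liftings $\phi(s_i)$ of $\eta_i$ in $\sE'$, and since $\det\phi\colon\det\sE\to\det\sE'=\det\sE(D_W)$ is multiplication by $\sigma_{D_W}$, the adjoint form attached to $\{\phi(s_i)\}$ equals $\sigma_{D_W}\Omega$. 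Lifting-independence applied in the $\sE'$-picture — where the split liftings $\rho(\eta_i)$ produce adjoint form $0$ — yields $\sigma_{D_W}\Omega=\sum_i l_i\omega_i$ with $l_i\in H^0(X,\sL(D_W))$. The isomorphism $H^0(X,\sL)\cong H^0(X,\sL(D_W))$ forces $l_i=\sigma_{D_W}\widetilde l_i$ with $\widetilde l_i\in H^0(X,\sL)$, and cancelling the nonzerodivisor $\sigma_{D_W}$ gives $\Omega=\sum_i\widetilde l_i\omega_i\in\Ima(H^0(X,\sL)\otimes\lambda^nW)$.

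For the forward implication, suppose $\Omega=\sum_i a_i\omega_i$ with $a_i\in H^0(X,\sL)$. By the lifting computation above I can modify $s_i\mapsto s_i-\iota(a_i)$, with the signs dictated by this expression, so that the new liftings $\widetilde s_i$ satisfy $\Lambda^{n+1}(\widetilde s_1\wedge\cdots\wedge\widetilde s_{n+1})=0$ in $H^0(X,\det\sE)$; that is, $\widetilde s_1,\ldots,\widetilde s_{n+1}$ are everywhere linearly dependent in $\sE$. On the open set $U:=X\setminus(D_W\cup Z_W)$ some $\omega_i$ is nonzero, so the projections $\eta_1,\ldots,\eta_{n+1}$ span the fibre of $\sF$ and carry a unique (up to scale) relation $\sum c_i\eta_i=0$; vanishing of the top wedge forces the same relation $\sum c_i\widetilde s_i=0$ in $\sE$. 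Sending a local frame of $n$ independent $\eta_i$'s to the corresponding $\widetilde s_i$ therefore defines, consistently, a splitting $\rho\colon\sF\to\sE$ of $\pi$ over $U$. I would then extend $\rho$: across $Z_W$ it extends by Hartogs, since $\sHom(\sF,\sE)$ is locally free and $\codim Z_W\geq2$, while across $D_W$ the $\eta$-frame degenerates to the fixed multiplicity of $D_W$, so $\rho$ acquires poles bounded by $D_W$ and extends to a splitting $\sF\to\sE(D_W)=\sE'$. Hence the pushout sequence splits and $\xi\in\ker\big(H^1(X,\sF^\vee\otimes\sL)\to H^1(X,\sF^\vee\otimes\sL(D_W))\big)$.

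The hard part is the bookkeeping along $D_W$ in the forward direction: one must verify that the splitting built off $D_W$ acquires poles of order \emph{exactly} matched to the fixed divisor — no worse — so that it lands in $\sHom(\sF,\sE(D_W))$ and not in a larger twist; this is precisely what forces $D_W$, rather than some ad hoc divisor, to appear in the cohomological condition. Dually, the sole place where the hypothesis $H^0(X,\sL)\cong H^0(X,\sL(D_W))$ enters is the descent of the coefficients $l_i$ in the converse, which explains its asymmetric appearance in the statement. The bound on $\codim Z_W$ and the integrality of $X$ are used only to legitimise the Hartogs extension and the cancellation of $\sigma_{D_W}$.
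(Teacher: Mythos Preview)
The paper does not give a proof of this theorem: it is stated as a consequence of \cite[Theorem {[A]} and Theorem {[B]}]{RZ2}, so there is no in-paper argument to compare against. What follows is an assessment of your attempt on its own merits.

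Your treatment of the \emph{viceversa} is clean and correct: the lifting-independence computation, the pushout to $\sE'$, the fact that split liftings $\rho(\eta_i)$ give adjoint form zero, and the descent of the coefficients $l_i$ via the hypothesis $H^0(X,\sL)\cong H^0(X,\sL(D_W))$ are exactly the ingredients, and you have assembled them correctly.

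In the forward direction the strategy is right, but there is a genuine inaccuracy and a gap you yourself flag. First, you write ``$\sE(D_W)=\sE'$''; this is false. The pushout $\sE'$ sits in $0\to\sL(D_W)\to\sE'\to\sF\to0$, whereas $\sE(D_W)$ sits in $0\to\sL(D_W)\to\sE(D_W)\to\sF(D_W)\to0$. What is true is the chain of inclusions $\sE\subset\sE'\subset\sE(D_W)$, and your argument can be repaired through it: since $\rho$ is a splitting on $U$, its $\sF$-component is the identity, hence regular, so once you know the $\sL$-component has pole bounded by $D_W$ the extended map lands in $\sE'$, not merely in $\sE(D_W)$, and the splitting property persists by density. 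Second, the step you call ``the hard part'' is left as a slogan. To justify it you should work locally at a generic point of a component of $D_W$ (so outside $Z_W$), choose $n$ of the $\eta_i$ with $\omega_j'\neq0$ there, and write $\rho$ in a local frame via Cramer's rule: the $\sL$-component of $\rho(e_k)$ is a ratio of an $(n-1)$-minor (regular) by the $n\times n$ determinant, which is $\omega_j=\sigma_{D_W}\cdot\omega_j'$ with $\omega_j'$ a unit near the point. This bounds the pole by exactly $D_W$, which is what forces the fixed divisor---and not an arbitrary one---to appear in the cohomological condition. With these two clarifications your argument goes through.
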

\begin{rmk}\label{zerozero} Note that if $D_W=0$ the above theorem is a criterion for the vanishing of $\xi$.
\end{rmk}

\section[Sufficiently ample hypersurfaces]{The pseudo-Jacobi ring}
\label{sezione1}
Recall the following definition from \cite{green1}
\begin{defn}
We say that a property holds for a sufficiently ample line bundle $\sL$ on a projective variety $X$ if there exists an ample line bundle $\sL_0$ such that the property holds for all $\sL$ with $\sL\otimes \sL_0^{-1}$ ample.
\end{defn}

Take an $n$-dimensional smooth variety $Y$ and a sufficiently ample line bundle $L$ on $Y$. Let $\sigma\in H^0(Y,L)$ be a global section and $X$ the corresponding divisor. Assume that $X$ is smooth.

\subsection{Pseudo-Jacobi Ideal}

In the case that we are studying, the usual Jacobian ideal can be replaced by the so called pseudo-Jacobi ideal introduced in \cite{green1} and \cite{green2}. We briefly recall how it is constructed.

Given a line bundle $L$ on $Y$ and the sheaf $\Theta_Y$ of regular vector fields, consider the extension
\begin{equation}
0\to \sO_Y\to \Sigma_L\stackrel{\tau}{\rightarrow} \Theta_Y\to 0
\label{prolongation}
\end{equation} with extension class $-c_1(L)\in H^1(Y,\Omega^1_Y)$. $\Sigma_L$ is a sheaf of differential operators of order less or equal to $1$ on the sections of $L$.
In an open subset of $Y$ with coordinates $x_1,\ldots,x_n$ this sheaf is free and is generated by the constant section $1$ and the sections $D_i$, for $i=1,\dots,n$, which operates on the sections of $L$ by
\begin{equation*}
D_i(f\cdot l)=\frac{\partial f}{\partial x_i}\cdot l
\end{equation*}  where $l$ is a trivialization of $L$. The operators $D_i$ are sent to $\frac{\partial}{\partial x_i}$ in $\Theta_Y$.

 In particular to a global section $\sigma$ of $L$, we can associate a global section $\widetilde{d\sigma}$ of $L\otimes\Sigma_L^\vee$. If locally $\sigma=f\cdot l$, then $\widetilde{d\sigma}$ is given by
\begin{equation}\label{diffs}
\widetilde{d\sigma}=f\cdot l\cdot 1^\vee+\sum_{i=1}^n \frac{\partial f}{\partial x_i}\cdot l\cdot D_i^\vee
\end{equation} where $\{1^\vee,D_1^\vee,\ldots,D_n^\vee\}$ is a local basis of $\Sigma_L^\vee$ dual to $\{1,D_1,\ldots,D_n\}$.

Given a line bundle $E$, the contraction by $\widetilde{d\sigma}$ gives a map
\begin{equation*}
E\otimes \Sigma_L\otimes L^\vee\to E.
\end{equation*} 
To give an idea in the case $E=\sO_X$, the contraction $\Sigma_L\otimes L^\vee\to \sO_Y$ is given explicitly in local coordinates by 
\begin{equation*}
a_0\cdot l^\vee\otimes 1+\sum_{i=1}^n a_i \cdot l^\vee\otimes D_i\mapsto a_0\cdot f+\sum_{i=1}^n a_i\cdot\frac{\partial f}{\partial x_i}.
\end{equation*} 

\begin{defn}
The \emph{pseudo-Jacobi ideal} $\sJ_{E,\sigma}$ is the image of the map 
\begin{equation}
H^0(Y,E\otimes \Sigma_L\otimes L^\vee)\to H^0(Y,E).
\end{equation} 
\end{defn}

The quotient $H^0(Y,E)/\sJ_{E,\sigma}$ is denoted by $R_{E,\sigma}$.

%and is a piece of a ring graded $\sR:=\oplus_{m\geq 0}R_{\sL^{\otimes m},\sigma}$ called pseudo-Jacobi ring of $E$ with respect to $L$.

The $k$-graded piece of the usual Jacobian ideal of a homogeneous polynomial $F$ of degree $d$ is recovered taking $L=\sO_{\mP^n}(d)$ and $E=\sO_{\mP^n}(k)$. In this case it easy to see that $\Sigma_L=\oplus_{i=1}^{n+1}\sO_{\mP^n}(1)$ and sequence (\ref{prolongation}) is the Euler sequence 
\begin{equation}
0\to \sO_{\mP^n}\to\bigoplus^{n+1}\sO_{\mP^n}(1)\to \Theta_{\mP^n}\to0.
\end{equation} The pseudo-Jacobi ideal $\sJ_{\sO_{\mP^n}(k),F}\subset H^0(\mP^n,\sO_{\mP^n}(k))$ is generated by $\frac{\partial F}{\partial x_0},\ldots,\frac{\partial F}{\partial x_n}$, that is it is the degree $k$ part of the Jacobian ideal.

In the case of a smooth algebraic variety $Y$ of dimension $n$ with a smooth hypersurface $X$, we take $L$ to be the sheaf $\sO_Y(X)$, the section $\sigma\in H^0(Y,L)$ is such that $X=\text{div}(\sigma)$, and $E=L=\sO_Y(X)$. We consider the deformations of $X$ inside of the ambient space $Y$. Exactly as in the case of projective hypersurfaces, such an infinitesimal deformation of $X$ is given by $X+tR=0$, $t^2=0$, where $R\in H^0(Y,L)$.
Define $\text{Aut}(Y,L)=\{f\colon Y\to Y\text{ such that } f^*(L)=L\}$. The base of the Kuranishi family for $X$
is $|L|/\text{Aut}(Y,L)$ and we have
\begin{prop}\label{tang}
The tangent space to $|L|/\text{Aut}(Y,L)$ at $X$ is $R_{L,\sigma}$.
\end{prop}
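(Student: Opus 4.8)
The plan is to identify the tangent space to the deformation functor in two stages: first, the tangent space to the Hilbert scheme (or rather to the linear system $|L|$ as a family of divisors in $Y$), and second, the tangent space to the quotient by $\mathrm{Aut}(Y,L)$, which amounts to quotienting out the directions coming from the infinitesimal action of the automorphism group. First I would recall that an infinitesimal deformation of $X$ inside $Y$ is given, as stated, by $\sigma + tR = 0$ with $t^2 = 0$, $R \in H^0(Y,L)$; so the tangent space to $|L|$ at $[X]$ (before quotienting) is naturally $H^0(Y,L)$ itself (or $H^0(Y,L)/\mathbb{C}\cdot\sigma$ if one works projectively — this scalar direction is absorbed into $\mathrm{Aut}$, so it is harmless to carry $H^0(Y,L)$ and quotient at the end). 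The point is then to show that the subspace of $H^0(Y,L)$ corresponding to deformations that are trivial modulo the $\mathrm{Aut}(Y,L)$-action is exactly the pseudo-Jacobi ideal $\sJ_{L,\sigma}$, so that the quotient is $R_{L,\sigma} = H^0(Y,L)/\sJ_{L,\sigma}$.

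Next I would make the infinitesimal $\mathrm{Aut}(Y,L)$-action explicit. The Lie algebra of $\mathrm{Aut}(Y,L) = \{f\colon Y\to Y : f^*L \cong L\}$ is $H^0(Y,\Sigma_L)$: indeed $\Sigma_L$ is precisely the sheaf of first-order differential operators on sections of $L$, i.e. infinitesimal symmetries of the pair $(Y,L)$, and the sequence (\ref{prolongation}) realizes $H^0(Y,\Sigma_L)$ as an extension of (a subspace of) $H^0(Y,\Theta_Y)$ — vector fields on $Y$ — by $H^0(Y,\sO_Y) = \mathbb{C}$, the scalars acting on $L$. An element $D \in H^0(Y,\Sigma_L)$ acts on the section $\sigma \in H^0(Y,L)$ by $\sigma \mapsto D(\sigma)$, and this is exactly the operation dual to contraction by $\widetilde{d\sigma}$: in local coordinates, if $D = a_0\cdot 1 + \sum a_i D_i$ and $\sigma = f\cdot l$, then $D(\sigma) = (a_0 f + \sum a_i \partial f/\partial x_i)\cdot l$, which is precisely the image of $a_0\cdot l^\vee\otimes 1 + \sum a_i\cdot l^\vee\otimes D_i$ under the contraction map $\Sigma_L\otimes L^\vee \to \sO_Y$ displayed in the excerpt, re-tensored by $L = E$. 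Hence the image of the infinitesimal action map $H^0(Y,\Sigma_L) \to H^0(Y,L)$, $D \mapsto D(\sigma)$, coincides with the image of $H^0(Y, L\otimes\Sigma_L\otimes L^\vee) \to H^0(Y,L)$ given by contraction with $\widetilde{d\sigma}$ — and that image is by definition $\sJ_{L,\sigma}$.

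Finally I would assemble these: two deformations $\sigma + tR$ and $\sigma + tR'$ define the same point of $|L|/\mathrm{Aut}(Y,L)$ to first order iff $R - R'$ lies in the image of the infinitesimal $\mathrm{Aut}(Y,L)$-action, i.e. iff $R - R' \in \sJ_{L,\sigma}$; therefore the tangent space at $[X]$ is $H^0(Y,L)/\sJ_{L,\sigma} = R_{L,\sigma}$, as claimed. The main obstacle — the step requiring genuine care rather than bookkeeping — is justifying that $H^0(Y,\Sigma_L)$ really is the Lie algebra of $\mathrm{Aut}(Y,L)$ and that its action on $\sigma$ is the ``derivative'' described above; this rests on interpreting $\Sigma_L$ correctly as the Atiyah-type sheaf of infinitesimal automorphisms of $(Y,L)$ and on the compatibility of the local formula (\ref{diffs}) for $\widetilde{d\sigma}$ with that action. (One must also note that the scalar summand $\sO_Y\subset\Sigma_L$ contributes exactly the direction $\mathbb{C}\cdot\sigma$, matching the earlier remark that rescaling $\sigma$ is an automorphism; so the count is consistent whether one works in $H^0(Y,L)$ or in the projectivization.) This identification is essentially Green's original argument, and once it is in place the rest is the formal quotient computation above.
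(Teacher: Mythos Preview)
Your argument is correct and is essentially the standard one: identify the tangent space to $|L|$ with $H^0(Y,L)$, identify the Lie algebra of $\mathrm{Aut}(Y,L)$ with $H^0(Y,\Sigma_L)$ via the Atiyah interpretation of $\Sigma_L$, check that the infinitesimal action $D\mapsto D(\sigma)$ coincides with the contraction by $\widetilde{d\sigma}$ (so its image is $\sJ_{L,\sigma}$), and conclude by the usual description of the tangent space to a quotient.

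The paper does not give an independent proof at all: its entire argument is the single line ``See \cite[Corollary page 48]{green2}.'' What you have written is precisely the content of Green's argument that the paper is citing, so there is no genuine difference of approach to compare---you have simply unpacked the reference. Your caveat about the scalar direction $\mathbb{C}\cdot\sigma$ being absorbed by the $\sO_Y$ summand of $\Sigma_L$ is the right bookkeeping detail, and your identification of the one nontrivial step (that $H^0(Y,\Sigma_L)=\mathrm{Lie}\,\mathrm{Aut}(Y,L)$ with the action being $D(\sigma)$) is accurate.
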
 \begin{proof} See \cite[Corollary page 48]{green2}.\end{proof}

\section{Infinitesimal Torelli theorem and smooth Grassmannian hypersurfaces}

Following Green's strategy we will reprove the infinitesimal Torelli theorem for hypersurfaces in Grassmannians in a way suitable for later use. A proof valid in a more general context is given in \cite{K}.

Let $\mG=\text{Grass}(s,l+1)$ be the Grassmannian variety of $s$-planes in $\mC^{l+1}$. For $l=1,2$ we obtain only $\mP^1$ and $\mP^2$, hence we will assume $l\geq 3$. Denote by $N=s(l+1-s)$ the dimension of $\mG$ and note that $N\geq l$. Let $X$ be a smooth hypersurface in $|\sO_\mG(a)|$.

To prove that the infinitesimal Torelli holds for $X$ in our case it is enough to show that the map
\begin{equation}
\label{high}
H^1(X,\Theta_X)\to \text{Hom}(H^0(X,K_X),H^1(X,\Omega^{N-2}_X))
\end{equation} is injective. In fact this map is the highest piece of the derivative of the period map, hence if (\ref{high}) is injective, the derivative of the period map is itself injective.

The idea is to prove the surjectivity of the dual of (\ref{high}), which is 
\begin{equation}
H^1(X,\Omega^{N-2}_X)^\vee\otimes H^0(X,K_X)\to H^1(X,\Theta_X)^\vee.
\label{highdual}
\end{equation} 
\begin{lem}
For $a>l$, (\ref{highdual}) fits into the following commutative diagram 
\begin{equation}
\xymatrix{
H^0(X,K_X((N-2)a))\otimes H^0(X,K_X)\ar[r]\ar[d]& H^0(X,K_X^{\otimes2}(N-2)a)\ar@{->>}[d]\\
H^1(X,\Omega^{N-2}_X)^\vee\otimes H^0(X,K_X)\ar[r]&H^1(X,\Theta_X)^\vee
}
\end{equation}
\end{lem}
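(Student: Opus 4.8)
The plan is to identify both rows of the diagram with maps of cohomology groups that arise from a single resolution of $\Omega^p_X$ on the Grassmannian, so that the diagram commutes tautologically. First I would recall, following Green, that for $X \in |\sO_\mG(a)|$ smooth the bundles $\Omega^p_X$ have a Koszul-type description: there is a conormal sequence $0 \to \sO_X(-a) \to \Omega^1_\mG|_X \to \Omega^1_X \to 0$, and taking exterior powers yields short exact sequences $0 \to \Omega^{p-1}_X(-a) \to \Omega^p_\mG|_X \to \Omega^p_X \to 0$. Iterating, $\Omega^{N-2}_X$ is resolved by the $\Omega^{N-1-j}_\mG|_X(-ja)$. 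Since $K_\mG = \sO_\mG(-(l+1))$ (the canonical bundle of $G(s,l+1)$ is $\sO_\mG(-l-1)$) and $K_X = \sO_X(a - l - 1)$ by adjunction, Serre duality on $X$ (of dimension $N-1$) turns $H^1(X,\Omega^{N-2}_X)^\vee$ into $H^{N-2}(X, \Omega^1_X \otimes K_X) = H^{N-2}(X,\Omega^1_X(a-l-1))$, and a diagram chase through the resolution above — using Bott vanishing on $\mG$ to kill the intermediate terms $H^\bullet(X,\Omega^{N-1-j}_\mG|_X(\cdots))$ once $a > l$ — produces the surjection $H^0(X, K_X((N-2)a)) \twoheadrightarrow H^1(X,\Omega^{N-2}_X)^\vee$ appearing as the left vertical arrow. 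This is the step I expect to be the main obstacle: one must check that \emph{all} the relevant cohomology groups of $\Omega^q_\mG|_X$ twisted by the appropriate powers of $\sO_\mG(1)$ vanish in the range dictated by the hypothesis $a > l$, and this is precisely where the explicit numerology of the Grassmannian (via Bott's theorem for $G(s,l+1)$, or equivalently Griffiths-type residue computations) has to be invoked carefully; the bound $a>l$ should emerge exactly from requiring these vanishings.

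Next I would treat the right vertical arrow. By the same adjunction, $H^1(X,\Theta_X)^\vee \cong H^{N-2}(X, \Omega^1_X \otimes \Omega^1_X \otimes \dots)$—more precisely, Serre duality gives $H^1(X,\Theta_X)^\vee \cong H^{N-2}(X, \Omega^1_X \otimes K_X) = H^{N-2}(X,\Omega^1_X(a-l-1))$ as well, so the target of the dual Torelli map is literally identified with an extension/quotient of $H^0$-level data via the \emph{same} resolution; the surjectivity $H^0(X, K_X^{\otimes 2}((N-2)a)) \twoheadrightarrow H^1(X,\Theta_X)^\vee$ comes from the normal bundle sequence $0 \to \Theta_X \to \Theta_\mG|_X \to \sO_X(a) \to 0$ together with the vanishing $H^2(X,\Theta_\mG|_X) = 0$ (again Bott, again using $a>l$), so that $H^1(X,\Theta_X)$ is a quotient of $H^1(X,\sO_X(a))$; dualizing and twisting by $K_X = \sO_X(a-l-1)$ converts the surjection on $H^1(\sO_X(a))^\vee$ into the stated surjection, since $K_X(a)\otimes K_X = K_X^{\otimes 2}(a)$ and one further twists by $\sO((N-2)a)$ to land in the group written.

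Finally, the top horizontal map is simply multiplication $H^0(X, K_X((N-2)a)) \otimes H^0(X,K_X) \to H^0(X, K_X^{\otimes 2}((N-2)a))$ of global sections of line bundles on $X$, and the bottom horizontal map is (the dual of) the highest piece \eqref{high} of $d\sP_X$ tensored with $H^0(X,K_X)$. Commutativity of the square then reduces to the statement that under the Serre-duality identifications of the two vertical arrows, contraction with a Kodaira--Spencer class followed by cup product with an element of $H^0(K_X)$ agrees with multiplication of sections followed by the projection to $H^1(\Theta_X)^\vee$; this is the standard compatibility of the Griffiths residue/Yukawa pairing with the Jacobi-ring multiplication, which one checks locally on an affine cover of $\mG$ using the explicit description \eqref{diffs} of $\widetilde{d\sigma}$ and the description of $\Omega^p_X$-cohomology via the pseudo-Jacobi ideal from Section~\ref{sezione1}. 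Granting the generalized Macaulay theorem (Theorems \ref{macaulaygen} and \ref{macaulaygenc}) that the right vertical arrow's target is indeed $R_{K_X^{\otimes 2}((N-2)a),\sigma}$, the commutativity is then a formal consequence of functoriality of Serre duality and the Leibniz rule for the connecting operators, and no further computation is required beyond bookkeeping of the twists.
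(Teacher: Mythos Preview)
Your overall strategy --- resolving $\Omega^p_X$ via the conormal filtration and killing intermediate cohomology by Bott-type vanishing on $\mG$ --- is exactly the mechanism behind Green's Lemmas~1.10 and~1.14, which is what the paper invokes. However, the execution contains a genuine error that breaks the argument.

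The Serre duality computation is wrong. On $X$ of dimension $N-1$ one has $(\Omega^{N-2}_X)^\vee \otimes K_X \cong \Omega^1_X$, not $\Omega^1_X \otimes K_X$; hence
\[
H^1(X,\Omega^{N-2}_X)^\vee \cong H^{N-2}(X,\Omega^1_X),
\qquad
H^1(X,\Theta_X)^\vee \cong H^{N-2}(X,\Omega^1_X \otimes K_X),
\]
and these differ by a twist by $K_X$. You have equated them (``as well''), which would force the bottom row of the diagram to be an isomorphism before anything is proved. The left vertical arrow should land in $H^{N-2}(X,\Omega^1_X)$ and the right in $H^{N-2}(X,\Omega^1_X\otimes K_X)$; commutativity is then cup product with $H^0(X,K_X)$ between them, and the twist bookkeeping for $K_X((N-2)a)$ versus $K_X^{\otimes 2}((N-2)a)$ has to be redone accordingly. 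Second, in your normal-sequence paragraph you write that $H^1(X,\Theta_X)$ is a quotient of $H^1(X,\sO_X(a))$; the connecting map actually sends $H^0(X,\sO_X(a))$ onto $H^1(X,\Theta_X)$ (given $H^1(X,\Theta_{\mG|X})=0$), so the cohomological degree is off by one and the derivation of the surjection $H^0(X,K_X^{\otimes 2}((N-2)a))\twoheadrightarrow H^1(X,\Theta_X)^\vee$ does not go through as written. Finally, appealing to the generalized Macaulay theorem here is both unnecessary and out of order in the paper's logic: the present lemma is a step toward infinitesimal Torelli, and Macaulay is established independently afterwards.

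The paper's own proof sidesteps all of this by quoting Green's reduction directly: the diagram and its commutativity are the content of \cite[Lemma~1.14]{green1}, which by \cite[Lemma~1.10]{green1} holds once a finite list of groups $H^i(\mG,\Omega^j_\mG\otimes K_\mG^{-1}((-m-1)a))$ and $H^i(\mG,\Omega^j_\mG(-ma))$ vanish in the stated ranges. These are then checked by Serre-dualizing and reading off Snow's tables for $H^\bullet(\mG,\Omega^\bullet_\mG(k))$, and the bound $a>l$ is exactly what makes each twist exceed $l$. I would recommend either correcting the dualities and degrees above, or following that route.
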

\begin{proof}
This is, essentially, the content of \cite[Lemma 1.14]{green1}. Following \cite[Lemma 1.10]{green1} the claim follows by the vanishing of certain cohomologies, that we recall here:
\begin{enumerate}
	\item $H^i(\mG,\Omega^j_\mG\otimes K_\mG^{-1}(-m-1)a)=H^{i+1}(\mG,\Omega^j_\mG\otimes K_\mG^{-1}(-m-2)a)=0$ for $0<i<N-1,1\leq j\leq N-1, 1\leq m\leq N-3$
	\item $H^i(\mG,\Omega^j_\mG(-ma))=H^{i+1}(\mG,\Omega^j_\mG(-m-1)a)=0$ for $i<N-1,0\leq j\leq N-1, 1\leq m\leq N-1$.
\end{enumerate}
For (1) take the Serre dual and use the fact that $K_\mG=\sO_\mG(-l-1)$ to obtain
\begin{equation}
h^i(\mG,\Omega^j_\mG\otimes K_\mG^{-1}(-m-1)a)=h^{N-i}(\mG,\Omega^{N-j}(a(m+1)-l-1))
\end{equation} and
\begin{equation}
h^{i+1}(\mG,\Omega^j_\mG\otimes K_\mG^{-1}(-m-2)a)=h^{N-i-1}(\mG,\Omega^{N-j}(a(m+2)-l-1)).
\end{equation} By \cite[Page 171]{snow}, these dimensions are both zero because $N-i>0$, $N-i-1>0$ and $a(m+1)-l-1>l$, $a(m+2)-l-1>l$ for $a>l$.
For (2), in the exact same way we have that 
\begin{equation}
h^i(\mG,\Omega^j_\mG(-ma))=h^{N-i}(\mG,\Omega^{N-j}(am))
\end{equation} and
\begin{equation}
h^{i+1}(\mG,\Omega^j_\mG((-m-1)a))=h^{N-i-1}(\mG,\Omega^{N-j}(a(m+1))).
\end{equation} Again these dimensions are zero because $N-i>0$, $N-i-1>0$ and $am>l$, $a(m+1)>l$ for $a>l$.
Hence for $a>l$ we have the vanishing required in (1) and (2) and we are done.
\end{proof}
\begin{rmk}
Note that in \cite[Lemma 1.14]{green1}, M. Green works in the general case of a smooth sufficiently ample divisor of a smooth variety. Here on the other hand, since we work in the more concrete case of an hypersurface in a Grassmannian manifold, we can give a precise estimate on \lq\lq how ample\rq\rq\ the hypersurface must be. 
\end{rmk}

By the previous lemma we are reduced to study the surjectivity of 
\begin{equation}
H^0(X,K_X((N-2)a))\otimes H^0(X,K_X)\to H^0(X,K_X^{\otimes2}(N-2)a)
\label{highsp}
\end{equation} for $a>l$.
To prove this we go back to the level of $\mG$:
\begin{lem}
If $a>l$ and 
\begin{equation}
H^0(\mG,K_\mG((N-1)a))\otimes H^0(\mG,K_\mG(a))\to H^0(\mG,K_\mG^{\otimes2}(Na))
\end{equation} is surjective, then (\ref{highsp}) is surjective.
\end{lem}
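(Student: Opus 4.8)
The plan is to descend the surjectivity statement from $\mathbb G$ to $X$ using the adjunction formula $K_X = K_\mathbb G(a)|_X$ and the restriction sequence for $X \subset \mathbb G$, exactly in the spirit of Green's argument for projective hypersurfaces. First I would rewrite the target spaces on $X$ in terms of line bundles on $\mathbb G$: since $K_X = \sO_\mathbb G(a-l-1)|_X$, we have $K_X((N-2)a) = \sO_\mathbb G((N-1)a - l - 1)|_X$ and $K_X^{\otimes 2}((N-2)a) = \sO_\mathbb G(Na - 2l - 2)|_X = K_\mathbb G^{\otimes 2}(Na)|_X$, while $K_X = K_\mathbb G(a)|_X$. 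So (\ref{highsp}) is the restriction to $X$ of the multiplication map on $\mathbb G$ appearing in the statement.

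Next I would set up the two Koszul/restriction exact sequences. Twisting $0 \to \sO_\mathbb G(-a) \to \sO_\mathbb G \to \sO_X \to 0$ by $K_\mathbb G((N-1)a) = \sO_\mathbb G((N-1)a - l - 1)$ gives
\begin{equation*}
0 \to \sO_\mathbb G((N-2)a - l - 1) \to \sO_\mathbb G((N-1)a - l - 1) \to K_X((N-2)a) \to 0,
\end{equation*}
and similarly twisting by $K_\mathbb G^{\otimes 2}(Na)$ gives
\begin{equation*}
0 \to K_\mathbb G^{\otimes 2}((N-1)a) \to K_\mathbb G^{\otimes 2}(Na) \to K_X^{\otimes 2}((N-2)a) \to 0.
\end{equation*}
Taking cohomology, the surjectivity of $H^0(\mathbb G, K_\mathbb G((N-1)a)) \to H^0(X, K_X((N-2)a))$ will follow from the Bott-type vanishing $H^1(\mathbb G, \sO_\mathbb G((N-2)a - l - 1)) = 0$; by Serre duality this is $H^{N-1}(\mathbb G, \sO_\mathbb G((N-1)a - l - 1 + l + 1))^\vee = H^{N-1}(\mathbb G, \sO_\mathbb G(\text{positive}))^\vee$, which vanishes since $N \geq l + 1 \geq 2$ so $N - 1 > 0$ and $\sO_\mathbb G$ twisted by a positive multiple of the ample generator has no intermediate cohomology (Bott/Snow). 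Then I would form the commutative square whose top row is the multiplication map on $\mathbb G$ (assumed surjective) and whose bottom row is (\ref{highsp}), with the two left vertical maps being the restriction maps $H^0(\mathbb G, -) \to H^0(X, -)$. The left-most restriction $H^0(\mathbb G, K_\mathbb G((N-1)a)) \to H^0(X, K_X((N-2)a))$ is surjective by the vanishing just noted; hence the composite $H^0(\mathbb G, K_\mathbb G((N-1)a)) \otimes H^0(X, K_X) \to H^0(X, K_X^{\otimes 2}((N-2)a))$ is surjective (using the hypothesis on the $\mathbb G$-level multiplication composed with the right restriction, which is surjective by the analogous vanishing $H^1(\mathbb G, K_\mathbb G^{\otimes 2}((N-1)a)) = 0$, again a positive twist of $\sO_\mathbb G$ up to Serre duality). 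A diagram chase then forces (\ref{highsp}) to be surjective.

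The only real point requiring care — and the step I expect to be the main obstacle — is checking that all the auxiliary line bundles $\sO_\mathbb G((N-2)a - l - 1)$, $K_\mathbb G^{\otimes 2}((N-1)a) = \sO_\mathbb G((N-1)a - 2l - 2)$, etc., are positive multiples of $\sO_\mathbb G(1)$, so that Bott vanishing on the Grassmannian (for line bundles, i.e.\ the cohomology of $\sO_\mathbb G(t)$ with $t > 0$ concentrated in degree $0$, as recorded in \cite{snow}) applies. This is where the hypothesis $a > l$ is used: one needs $(N-2)a - l - 1 > 0$ and $(N-1)a - 2l - 2 > 0$, which hold since $N \geq l \geq 3$ forces $N - 2 \geq 1$ and $(N-2)a \geq a > l > l - (1-1)$... more precisely $(N-2)a \geq a \geq l+1 > l + 1 - 1$ gives $(N-2)a - l - 1 \geq 0$ with equality impossible when $N-2 \geq 1$ and $a \geq l+1$, and $(N-1)a - 2l - 2 \geq (N-1)(l+1) - 2(l+1) = (N-3)(l+1) \geq 0$; the borderline cases $N = 3$ are handled by noting $a > l$ is strict. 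These are the same estimates already carried out in the proof of the previous lemma, so they are routine; the content of the lemma is purely the descent via adjunction and the diagram chase.
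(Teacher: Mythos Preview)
Your approach is essentially identical to the paper's: both use adjunction $K_X = K_\mG(a)|_X$ and the restriction sequence to set up the commutative square, then conclude from the surjectivity of the right vertical restriction $H^0(\mG, K_\mG^{\otimes 2}(Na)) \twoheadrightarrow H^0(X, K_X^{\otimes 2}((N-2)a))$. The paper is marginally more economical---it invokes only the single Kodaira vanishing $H^1(\mG, K_\mG^{\otimes 2}((N-1)a)) = 0$ and does not bother with the left vertical surjection or any borderline-twist analysis; your extra checks are harmless but unnecessary for the diagram chase.
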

\begin{proof}
From the exact sequence 
\begin{equation}
0\to\sO_\mG(-a)\to \sO_\mG\to\sO_X\to 0
\label{restrizione}
\end{equation} and the adjunction formula $K_X=K_\mG(a)|_X$, we obtain the long exact sequence
\begin{equation}
H^0(\mG,K_\mG^{\otimes2}(Na))\to H^0(X,K_X^{\otimes2}((N-2)a))\to H^1(\mG,K_\mG^{\otimes2}((N-1)a)).
\end{equation}
By the Kodaira vanishing theorem, $H^1(\mG,K_\mG^{\otimes2}(N-1)a))=0$, and by adjuntion we have the following diagram
\begin{equation}
\xymatrix{
H^0(\mG,K_\mG((N-1)a))\otimes H^0(\mG,K_\mG(a))\ar[r]\ar[d]& H^0(\mG,K_\mG^{\otimes2}(N)a)\ar@{->>}[d]\\
H^0(X,K_X((N-2)a))\otimes H^0(X,K_X)\ar[r] &H^0(X,K_X^{\otimes2}(N-2)a).
} 
\end{equation}The thesis immediately follows.
\end{proof}

\begin{prop}\label{etutto}
If $X\subset \mathbb G$ is a smooth divisor $(\sigma=0)$ in $|aH|$ where $a\geq l$ then $H^1(X,\Theta_X)$ is isomorphic to  $R_{\sO_{\mathbb G}(a),\sigma}$.
\end{prop}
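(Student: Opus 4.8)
The plan is to realise both $H^{1}(X,\Theta_{X})$ and $R_{\sO_{\mG}(a),\sigma}$ as the cokernel of one and the same map. Write $L=\sO_{\mG}(a)$. Since $X\in|aH|$ is a smooth divisor, adjunction gives the normal bundle $N_{X/\mG}\cong\sO_{X}(a)=L|_{X}$, and on $X$ we have the two short exact sequences $0\to\Theta_{X}\to\Theta_{\mG}|_{X}\to N_{X/\mG}\to0$ and the restriction of the prolongation sequence (\ref{prolongation}), namely $0\to\sO_{X}\to\Sigma_{L}|_{X}\to\Theta_{\mG}|_{X}\to0$. The local formula (\ref{diffs}) shows the geometric point: the contraction by $\widetilde{d\sigma}$ defining $\Sigma_{L}\to L$, once restricted to $X=(f=0)$, loses its constant term $f\cdot l$ and hence factors as $\Sigma_{L}|_{X}\twoheadrightarrow\Theta_{\mG}|_{X}\to N_{X/\mG}$, the second arrow being the usual surjection $\partial/\partial x_{i}\mapsto\partial f/\partial x_{i}$.

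Next I would study $\Phi\colon H^{0}(\mG,L)\xrightarrow{\ \mathrm{res}\ }H^{0}(X,N_{X/\mG})\xrightarrow{\ \partial\ }H^{1}(X,\Theta_{X})$, where $\partial$ is the connecting map of the normal sequence, and prove that $\Phi$ is surjective with kernel $\sJ_{\sO_{\mG}(a),\sigma}$. For surjectivity: $\mathrm{res}$ is onto because $H^{1}(\mG,\sO_{\mG})=0$ (its kernel, the sections vanishing on $X$, is the line $\mC\cdot\sigma$), and $\partial$ is onto because $H^{1}(X,\Theta_{\mG}|_{X})=0$, which follows from $0\to\Theta_{\mG}(-a)\to\Theta_{\mG}\to\Theta_{\mG}|_{X}\to0$ together with $H^{1}(\mG,\Theta_{\mG})=0$ (infinitesimal rigidity of the Grassmannian) and $H^{2}(\mG,\Theta_{\mG}(-a))=0$.

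For the kernel I would argue both inclusions. One one hand $\sigma\in\sJ_{\sO_{\mG}(a),\sigma}$ (the constant section $1\in H^{0}(\mG,\sO_{\mG})\subset H^{0}(\mG,\Sigma_{L})$ contracts to $\sigma$) and $\sigma|_{X}=0$; and if $s\in H^{0}(\mG,\Sigma_{L})$ contracts to $R$, then by the factorization above $R|_{X}$ lies in the image of $H^{0}(X,\Theta_{\mG}|_{X})\to H^{0}(X,N_{X/\mG})$, whence $\partial(R|_{X})=0$; so $\sJ_{\sO_{\mG}(a),\sigma}\subseteq\Ker\Phi$. Conversely, if $\Phi(R)=0$, exactness of the normal sequence yields $\bar v\in H^{0}(X,\Theta_{\mG}|_{X})$ mapping to $R|_{X}$; lift $\bar v$ to $v\in H^{0}(\mG,\Theta_{\mG})$ using $H^{1}(\mG,\Theta_{\mG}(-a))=0$, then lift $v$ to $s\in H^{0}(\mG,\Sigma_{L})$ using $H^{1}(\mG,\sO_{\mG})=0$; the contraction of $s$ restricts to $R|_{X}$, so it differs from $R$ by an element of $\mC\sigma\subseteq\sJ_{\sO_{\mG}(a),\sigma}$, hence $R\in\sJ_{\sO_{\mG}(a),\sigma}$. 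Thus $\Phi$ descends to an isomorphism $R_{\sO_{\mG}(a),\sigma}=H^{0}(\mG,L)/\sJ_{\sO_{\mG}(a),\sigma}\xrightarrow{\ \sim\ }H^{1}(X,\Theta_{X})$.

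The only nonformal inputs are therefore $H^{1}(\mG,\sO_{\mG})=0$, $H^{1}(\mG,\Theta_{\mG})=0$ and the two vanishings $H^{1}(\mG,\Theta_{\mG}(-a))=0$ and $H^{2}(\mG,\Theta_{\mG}(-a))=0$ for $a\ge l$; by Serre duality and $K_{\mG}=\sO_{\mG}(-l-1)$ the latter read $H^{N-1}(\mG,\Omega^{1}_{\mG}(a-l-1))=0$ and $H^{N-2}(\mG,\Omega^{1}_{\mG}(a-l-1))=0$, which follow from Bott's theorem for Grassmannians (equivalently from Snow's tables \cite{snow}) exactly as in the Lemmas above, the inequalities $N\ge l\ge 3$ (hence $N\ge 4$) keeping us away from the Hodge classes. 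I expect the only genuine obstacle to be precisely this step: checking that the relevant twisted cohomology of $\Omega^{1}_{\mG}$ vanishes down to the sharp threshold $a\ge l$, the borderline value $a=l$ — where $\Omega^{1}_{\mG}$ is twisted by $\sO_{\mG}(-1)$ — being the one that has to be read off the Bott/Snow computation directly rather than deduced from a crude positivity estimate.
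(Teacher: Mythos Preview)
Your argument is correct and follows essentially the same route as the paper's proof: both use the normal sequence and the restricted prolongation sequence together with the factorization of $\widetilde{d\sigma}|_{X}$ through $\Theta_{\mG}|_{X}$, reducing everything to the vanishings $H^{1}(\mG,\Theta_{\mG})=H^{1}(\mG,\Theta_{\mG}(-a))=H^{2}(\mG,\Theta_{\mG}(-a))=0$ (and $H^{1}(\mG,\sO_{\mG}(-a))=0$) drawn from \cite{snow}. The only cosmetic difference is that the paper first passes through the intermediate quotient $H^{0}(X,\sO_{X}(a))/\Ima H^{0}(X,\Sigma_{|X})$ and then compares it with $R_{\sO_{\mG}(a),\sigma}$ via the surjection $H^{0}(\mG,\Sigma)\twoheadrightarrow H^{0}(X,\Sigma_{|X})$, whereas you compute kernel and image of the single composite $\Phi$ directly.
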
 
\begin{proof} 
It easy to see using the restriction sequence $0\to \Theta_{\mG}(-a)\to \Theta_\mG\to  \Theta_{\mG|X}\to 0$ and the cohomology vanishings given in \cite[Theorem page 171]{snow} that $H^1(X,\Theta_{\mG|X})=0$.
Hence by the normal exact sequence $0\to \Theta_X\to \Theta_{\mG|X}\to \sO_X(a)\to 0$, we have the identification 
\begin{equation}
H^1(X,\Theta_X)\cong H^0(X,\sO_X(a))/\Ima H^0(X,\Theta_{\mG|X}).
\end{equation} Call $\Sigma$ the sheaf of differential operators introduced in section \ref{sezione1}. The diagram
\begin{equation}
\xymatrix{
0\ar[r]&\sO_\mG\ar[r]\ar[dr]^-\sigma&\Sigma\ar[r]\ar[d]^-{\widetilde{d\sigma}}&\Theta_{\mG}\ar[r]&0\\
&&\sO_{\mG}(a)&&
}
\end{equation} restricted to $X$ gives 
\begin{equation}
\xymatrix{
0\ar[r]&\sO_X\ar[r]\ar[dr]^-0&\Sigma_{|X}\ar[r]\ar[d]^{\widetilde{d\sigma}_{|X}}&\Theta_{\mG|X}\ar[r]\ar[ld]&0\\
&&\sO_{X}(a)&&
}
\end{equation}
and we have that the image of $H^0(X,\Theta_{\mG|X})$ in $H^0(X,\sO_X(a))$ is the same as the image of $H^0(X,\Sigma_{|X})$. Hence 
\begin{equation}
H^1(X,\Theta_X)\cong H^0(X,\sO_X(a))/\Ima H^0(X,\Sigma_{|X})
\end{equation} and it remains to prove that this is isomorphic to $R_{\sO_{\mathbb G}(a),\sigma}=H^0(\mG,\sO_\mG(a))/\Ima H^0(\mG,\Sigma)$. To do this it is enough to check that the kernel of the composition
\begin{equation}
H^0(\mG,\sO_\mG(a))\to H^0(X,\sO_X(a))\to  H^0(X,\sO_X(a))/\Ima H^0(X,\Sigma_{|X})
\end{equation} is exactly $\Ima H^0(\mG,\Sigma)$. This easily follows by the fact that $ H^0(\mG,\Sigma)$ surjects onto $ H^0(X,\Sigma_{|X})$. In fact by the exact sequence $0\to \sO_\mG(-a)\to \Sigma(-a)\to \Theta_\mG(-a)\to 0$ and again the vanishings of \cite[Theorem page 171]{snow} we have that $H^1(\mG, \Sigma(-a))=0$, and we are done.
\end{proof}

\begin{rmk}Proposition \ref{etutto} means that an infinitesimal deformation is trivial if and only if $R$ is an element of the pseudo-Jacobi ideal $\sJ_{L,\sigma}$.
\end{rmk} 

Now we show the infinitesimal Torelli theorem.

\begin{thm}\label{torotoro}
The infinitesimal Torelli theorem holds for smooth hypersurfaces $X$ in $|\sO_\mG(a)|$ if $a>l$.
\end{thm}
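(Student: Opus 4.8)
The plan is to run Green's reduction to the very end: by the two lemmas above, the theorem is equivalent to a single surjectivity statement on $\mG$ itself, and that is the only point which is not purely formal. More precisely, by the second of the two lemmas above it suffices, for $a>l$, that the multiplication map
\[
\mu\colon H^{0}(\mG,K_{\mG}((N-1)a))\otimes H^{0}(\mG,K_{\mG}(a))\longrightarrow H^{0}(\mG,K_{\mG}^{\otimes2}(Na))
\]
be surjective; then the first lemma together with its commutative diagram forces the map (\ref{highdual}) to be surjective, hence its dual (\ref{high}) is injective, and since (\ref{high}) is the top graded piece of $d\sP_{X}$ this gives the injectivity of $d\sP_{X}$, i.e.\ the infinitesimal Torelli theorem. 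So I would concentrate entirely on $\mu$.

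To handle $\mu$ I would first rewrite it in terms of the Pl\"ucker bundle only. Using $K_{\mG}=\sO_{\mG}(-l-1)$, set $p:=(N-1)a-(l+1)$ and $q:=a-(l+1)$. From $a>l$ we get $a\geq l+1$, hence $q\geq0$, and since $N\geq l\geq3$ also $p\geq0$; moreover $K_{\mG}((N-1)a)=\sO_{\mG}(p)$, $K_{\mG}(a)=\sO_{\mG}(q)$ and $K_{\mG}^{\otimes2}(Na)=\sO_{\mG}(p+q)$. Thus $\mu$ is nothing but the multiplication of global sections $H^{0}(\mG,\sO_{\mG}(p))\otimes H^{0}(\mG,\sO_{\mG}(q))\to H^{0}(\mG,\sO_{\mG}(p+q))$ of powers of the Pl\"ucker line bundle; if $p=0$ or $q=0$ this is an isomorphism, so we may assume $p,q\geq1$.

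For the surjectivity I would invoke the classical projective normality of $\mG$ in its Pl\"ucker embedding: the section ring $\bigoplus_{d\geq0}H^{0}(\mG,\sO_{\mG}(d))$ is generated as a $\mC$-algebra by $H^{0}(\mG,\sO_{\mG}(1))$. This follows from standard monomial theory (the straightening law for products of Pl\"ucker coordinates), or equivalently from the fact that the section ring of any $G/P$ relative to an ample line bundle attached to a single fundamental weight is generated in degree one. Consequently $H^{0}(\mG,\sO_{\mG}(d))$ is the $d$-fold product $\bigl(H^{0}(\mG,\sO_{\mG}(1))\bigr)^{\cdot d}$ for $d\geq1$, so $H^{0}(\mG,\sO_{\mG}(p))\cdot H^{0}(\mG,\sO_{\mG}(q))=H^{0}(\mG,\sO_{\mG}(p+q))$ and $\mu$ is onto. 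A representation-theoretic variant avoids quoting projective normality: by Borel--Weil, $H^{0}(\mG,\sO_{\mG}(d))$ is the irreducible $\mathrm{GL}_{l+1}(\mC)$-module indexed by the $s\times d$ rectangular partition, $\mu$ is $\mathrm{GL}_{l+1}$-equivariant, and it is nonzero because the $(p+q)$-th power of a single Pl\"ucker coordinate is the product of its $p$-th and $q$-th powers; an irreducible module has no proper nonzero submodule, so $\mu$ is surjective.

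The only genuinely non-formal step is thus this surjectivity on $\mG$, namely that the Pl\"ucker coordinate ring is generated in degree one; everything else is the bookkeeping chain $\mu$ surjective $\Rightarrow$ (\ref{highsp}) surjective $\Rightarrow$ (\ref{highdual}) surjective $\Rightarrow$ (\ref{high}) injective $\Rightarrow$ $d\sP_{X}$ injective, which runs through the two lemmas already established and uses only the Kodaira and Bott-type vanishings recorded there. I would close by observing that the hypothesis $a>l$ enters exactly through those vanishings, and nowhere else, which is why it is the precise bound in the statement.
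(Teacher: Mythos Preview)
Your proof is correct and follows essentially the same route as the paper: reduce via the two lemmas to the surjectivity of the multiplication map on $\mG$, then appeal to projective normality of the Pl\"ucker embedding. Your rewriting in terms of $p$ and $q$ and the alternative argument via Borel--Weil and Schur's lemma are welcome elaborations but do not change the strategy. One small quibble: your closing sentence says $a>l$ enters ``exactly through those vanishings, and nowhere else,'' yet two paragraphs earlier you used $a>l$ to guarantee $q=a-(l+1)\geq0$ so that the source of $\mu$ is nonzero; the paper makes this second use explicit as well.
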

\begin{proof}
By the previous lemma it is enough to show that 
\begin{equation}
\label{surG}
H^0(\mG,K_\mG((N-1)a))\otimes H^0(\mG,K_\mG(a))\to H^0(\mG,K_\mG^{\otimes2}(Na))
\end{equation} is surjective. This follows by the fact that the Grassmannian $\mG$ is projectively normal in its Pl\"ucker embedding in $\mP^M$.
This means that the restriction $H^0(\mP^M,\sO(k))\to H^0(\mG,\sO_\mG(k))$ is surjective for $k\geq0$. Hence the surjectivity of (\ref{surG}) follows from the polynomial one at the level of the projective space. Note that the hypothesis $a>l$ ensures that all the twists appearing in (\ref{surG}) are greater than $0$.
\end{proof}

\section{Infinitesimal Torelli and generalized adjoint}

The aim of this section is to study the deformations of a smooth divisor $X$ in $|\sO_\mG(a)|$ with $a>l$ in a Grassmannian variety $\mG$ using the theory of generalized adjoint forms.

\subsection{Twisted normal sequence} 
An infinitesimal deformation $\xi\in \text{Ext}^1(\Omega^1_X,\sO_X)\cong H^1(X,\Theta_X)$ of $X$ gives an exact sequence
\begin{equation}
\label{estensionedivisore}
0\to\sO_X\to \Omega^1_{\sX|X}\to\Omega^1_X\to0. 
\end{equation} By Proposition \ref{etutto} we have that all the deformations of $X$ are inside the ambient space $\mG$, that is, they are given by $R\in \mP(H^0(\mG,\sO_\mG(a)))$. In particular we have that $\xi$ is in the image of the map $H^0(\mG,\sO_\mG(a))\to H^1(X,\Theta_X)$ coming from the normal exact sequence
\begin{equation*}\label{normalll}
0\to \Theta_X\to \Theta_{\mG|X}\to \sO_X(a)\to0
\end{equation*} and the restriction sequence
\begin{equation*}
0\to \sO_\mG\to \sO_{\mG}(a)\to\sO_X(a)\to 0.
\end{equation*}

We can not apply the adjoint theory directly to (\ref{normalll}) because the sheaf $\Omega^1_X$ has no global sections (see point (5) in the lemma below). Hence the idea is to twist sequence  (\ref{normalll}) by a sheaf $\sO_X(k)$. In this way we obtain
\begin{equation}
\label{estensionedivisore1}
0\to\sO_X(k)\to \Omega^1_{\sX|X}(k)\to\Omega^1_X(k)\to0 
\end{equation} which is still associated to the same extension $\xi\in \text{Ext}^1(\Omega^1_X,\sO_X)$ via the isomorphism $$\text{Ext}^1(\Omega^1_X(k),\sO_X(k))\cong \text{Ext}^1(\Omega^1_X,\sO_X).$$ Furthermore if we choose $k$ big enough, we will have that $h^0(X,\Omega^1_X(k))\geq N$, hence we will be able to apply the theory of adjoin forms.

The following lemma proves that $k=2$ is enough for our purposes.
 
\begin{lem}
\label{elenco}
It holds that: 
\begin{enumerate}
	\item $\sO_\mG(a-2)$ is ample;
	\item the cohomology groups $H^i(\mG,\sO_\mG(2))$ vanish for $i\geq1$;
	\item the groups 	$H^i(\mG,\sO_\mG(2-a))$ and $H^i(\mG,\sO_\mG(2-2a))$ vanish for $i<N$;
	\item the groups $H^i(X,\sO_X(2))$ and $H^i(X,\sO_X(2-a))$ vanish for $1\leq i\leq N-2$;
	\item $H^0(X,\Omega^1_X)\cong H^0(X,\Omega^1_{\mG|X})\cong H^0(\mG,\Omega^1_\mG)=0$;
	\item $H^0(X,\Omega^1_X(2))\cong H^0(X,\Omega^1_{\mG|X}(2))\cong H^0(\mG,\Omega^1_\mG(2))$;
	\item $h^0(X,\Omega^1_X(2))\geq N$;
	\item $D_{\Omega^1_X(2)}=0$ i.e. $\Omega^1_X(2)$ is generated by global sections.
\end{enumerate} 
\end{lem}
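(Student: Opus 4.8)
The plan is to verify each of the eight assertions by reducing everything to the Bott--Borel--Weil type vanishing statements on $\mathbb{G}$ (as catalogued in \cite{snow}), combined with the restriction sequences already introduced. I would organize the proof as a sequence of short items mirroring the list. For (1): since $\mathcal{O}_{\mathbb{G}}(1)$ is the ample Pl\"ucker bundle and $a>l\geq 3$, we have $a-2\geq l-1\geq 2>0$, so $\mathcal{O}_{\mathbb{G}}(a-2)$ is ample. For (2) and (3): these are pure statements on $\mathbb{G}$; by \cite[Page 171]{snow} (Bott's theorem for Grassmannians), $H^i(\mathbb{G},\mathcal{O}_{\mathbb{G}}(t))=0$ for all $i>0$ when $t\geq 0$, giving (2) with $t=2$; and by Serre duality $H^i(\mathbb{G},\mathcal{O}_{\mathbb{G}}(2-a))\cong H^{N-i}(\mathbb{G},\mathcal{O}_{\mathbb{G}}(a-2-l-1))^\vee$, which vanishes for $N-i>0$ because $a-2-l-1 = a-l-3 \geq -2$; one must be slightly careful that the twist can be negative, but the relevant cohomology in the range $i<N$ is still zero since $\mathcal{O}_{\mathbb{G}}(j)$ for $-l-1<j<0$ is acyclic and $\mathcal{O}_{\mathbb G}(j)$ for $j\ge 0$ has cohomology only in degree $0$; the same analysis handles $\mathcal{O}_{\mathbb G}(2-2a)$ since $2-2a \geq 2-2a$ with $2a - 2 > l+1$ forcing acyclicity for $a>l\ge 3$. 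I would state these twist inequalities explicitly since the whole point of the lemma is the numerical bookkeeping.

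For (4): apply the restriction sequence $0\to\mathcal{O}_{\mathbb{G}}(2-a)\to\mathcal{O}_{\mathbb{G}}(2)\to\mathcal{O}_X(2)\to 0$. The long exact sequence together with (2) (vanishing of $H^i(\mathbb G,\mathcal O_{\mathbb G}(2))$ for $i\ge 1$) and (3) (vanishing of $H^i(\mathbb G,\mathcal O_{\mathbb G}(2-a))$ for $i<N$) sandwiches $H^i(X,\mathcal{O}_X(2))$ between zeros for $1\leq i\leq N-2$. For $H^i(X,\mathcal O_X(2-a))$, use instead $0\to \mathcal O_{\mathbb G}(2-2a)\to\mathcal O_{\mathbb G}(2-a)\to \mathcal O_X(2-a)\to 0$ and invoke (3) again for both $\mathcal O_{\mathbb G}(2-a)$ and $\mathcal O_{\mathbb G}(2-2a)$. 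For (5) and (6): $H^0(\mathbb{G},\Omega^1_{\mathbb{G}})=0$ because $\mathbb{G}$ is rational (or directly by Bott), and $H^0(\mathbb G,\Omega^1_{\mathbb G}(2))$ is whatever Bott gives—no vanishing needed, just the isomorphism with the restrictions. The isomorphisms $H^0(X,\Omega^1_X(k))\cong H^0(X,\Omega^1_{\mathbb{G}|X}(k))\cong H^0(\mathbb{G},\Omega^1_{\mathbb{G}}(k))$ for $k=0,2$ follow from two sequences: the conormal sequence $0\to\mathcal{O}_X(-a)\to\Omega^1_{\mathbb{G}|X}\to\Omega^1_X\to 0$ twisted by $\mathcal{O}_X(k)$, using that $H^0(X,\mathcal{O}_X(k-a))=0$ and $H^1(X,\mathcal O_X(k-a))=0$ (the latter from (4) with $k=2$, noting $1\le 1\le N-2$ since $N\ge l\ge 3$); and the restriction sequence $0\to\Omega^1_{\mathbb{G}}(k-a)\to\Omega^1_{\mathbb{G}}(k)\to\Omega^1_{\mathbb{G}|X}(k)\to 0$, using $H^0(\mathbb G,\Omega^1_{\mathbb G}(k-a))=0$ and $H^1(\mathbb G,\Omega^1_{\mathbb G}(k-a))=0$ from \cite{snow} (here $k-a\leq 2-a<0$ and the relevant twists are negative enough, or one Serre-dualizes to $H^{N-1}(\mathbb G,\Omega^{N-1}_{\mathbb G}(a-k-l-1))$ which vanishes for $a-k-l-1\ne 0$ in the appropriate range—again a numerical check with $a>l$).

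For (7): having identified $H^0(X,\Omega^1_X(2))$ with $H^0(\mathbb{G},\Omega^1_{\mathbb{G}}(2))$, one computes the latter dimension by Bott's formula and checks it is $\geq N=\dim\mathbb{G}$; alternatively, and more robustly, use the Euler-type sequence for the Grassmannian relating $\Omega^1_{\mathbb G}(1)$ (or $(2)$) to the universal sub/quotient bundles and count sections, or simply note that $\Omega^1_{\mathbb G}(2)$ being globally generated (from (8)) together with the fact that its generic rank-$N$ evaluation is an isomorphism at a general point forces $h^0\ge N$. For (8): $\Omega^1_{\mathbb{G}}(1)$ is already globally generated on the Grassmannian (it is a homogeneous bundle whose associated representation is generated by highest weight vectors—concretely, the Pl\"ucker-type relations show $\Omega^1_{\mathbb G}(1)$ is a quotient of a trivial bundle), hence so is $\Omega^1_{\mathbb{G}}(2)$, and global generation descends to the restriction $\Omega^1_{\mathbb{G}|X}(2)$ and then to the quotient $\Omega^1_X(2)$ via the conormal sequence; therefore $D_{\Omega^1_X(2)}=0$. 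I expect the main obstacle to be item (7)/(8)—i.e., pinning down the precise Bott weights to get the numerical bound $h^0\geq N$ and the global generation—because unlike the other items these are not formal diagram chases but require an honest representation-theoretic computation on $\mathbb{G}=\mathrm{Grass}(s,l+1)$; everything else is a mechanical assembly of restriction sequences plus the black-box vanishing table from \cite{snow}. A clean way to sidestep the delicate part of (7) is to observe that $\Omega^1_{\mathbb G}(2)$ restricted to any projective line in $\mathbb G$ is globally generated and nonnegative, and that $\mathbb{G}$ is covered by lines, which both yields global generation and, by a dimension count at a general point, the bound $h^0(X,\Omega^1_X(2))\ge N-1 \ge$ what is needed to run the adjoint construction—though to get exactly $\ge N$ one does want the honest Bott computation.
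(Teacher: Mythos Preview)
Your approach is essentially the paper's: the same restriction and conormal sequences, with the vanishing inputs drawn from Kodaira/Nakano and the tables in \cite{snow}. Two small remarks. First, the claim that $\Omega^1_{\mathbb G}(1)$ is globally generated is unnecessary and in fact fails already for $\mathbb G=\mathbb P^n$ (where $\Omega^1(1)$ sits as a subbundle of $\mathcal O^{n+1}$ with $H^0=0$); the paper simply cites \cite{snow} for global generation of $\Omega^1_{\mathbb G}(2)$, and that is all you need for (8). Second, for (7) the paper just quotes the explicit dimension formula $\tfrac{3}{l+2}\binom{l+2}{s+2}\binom{l+2}{s-1}$ from \cite[Theorem 3.3]{snow}; your cleanest alternative is the one you almost state: once (6) identifies $H^0(X,\Omega^1_X(2))$ with $H^0(\mathbb G,\Omega^1_{\mathbb G}(2))$ and (8) gives global generation of the rank-$N$ bundle $\Omega^1_{\mathbb G}(2)$, surjectivity of the evaluation map at a point forces $h^0\geq N$ immediately, with no further Bott computation needed.
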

\begin{proof}
Part (1) is obvious.

For (2) note that $H^i(\mG,\sO_\mG(2))\cong H^i(\mG,K_\mG(l+3))$ since $K_\mG=\sO_{\mG}(-l-1)$. Hence the claim follows by the Kodaira vanishing.

Point (3) also follows by the Kodaira Vanishing theorem.

Part (4) follows by the exact sequence $0\to\sO_\mG(-a)\to \sO_\mG\to\sO_X\to0$ conveniently tensorized and by the vanishing in (2) and (3).

For (5) consider the conormal exact sequence $0\to \sO_X(-a)\to \Omega^1_{\mG|X}\to \Omega^1_X\to 0$. It immediately gives the isomorphism $H^0(X,\Omega^1_{\mG|X})\cong H^0(X,\Omega^1_{X})$. Furthermore the exact sequence $0\to \Omega^1_\mG(-a)\to \Omega^1_{\mG}\to \Omega^1_{\mG|X}\to 0$ and the Nakano vanishing give the isomorphism $H^0(X,\Omega^1_{\mG|X})\cong H^0(X,\Omega^1_{\mG})$. Finally it is well-known that $H^0(X,\Omega^1_{\mG})=0$.

We have that $\sO_\mG(a-2)$ is ample by (1), hence (6) follows in the same fashion as (5).

The dimension of $H^0(\mG,\Omega^1_\mG(2))$ (which is equal to $h^0(X,\Omega^1_X(2))$ by the previous point) is explicitly calculated in \cite[Theorem 3.3]{snow} and it is 
$$\frac{3}{l+2}\binom{l+2}{s+2}\binom{l+2}{s-1}.$$ A simple computation shows that is is grater than $N=s(l+1-s)$.

Finally (8) follows from (6) and the fact that $\Omega^1_\mG(2)$ is generated by global sections (see \cite{snow}).
\end{proof}

\subsection{Infinitesimal deformations and rational forms}
Consider the diagram
\begin{equation}
\label{diagramma8}
\xymatrix{
&&&0&\\
0\ar[r]&\sO_X(2)\ar[r]&\Omega^1_{\sX|X}(2)\ar[r]&\Omega^1_X(2)\ar[u]\ar[r]&0\\
&&&\Omega^1_{\mG|X}(2)\ar[u]&\\
&&&\sO_X(2-a)\ar[u]&\\
&&&0.\ar[u]&}
\end{equation} By the previous Lemma, $H^1(X,\sO_X(2))=H^1(X,\sO_X(2-a))=0$. Hence all the global meromorphic $1$-forms of $\Omega_X^1(2)$ can be lifted both to $H^0(X,\Omega^1_{\sX|X}(2))$ and $H^0(X,\Omega^1_{\mG|X}(2))$.
 
Diagram (\ref{diagramma8}) can be completed as follows
\begin{equation}
\label{diagramma6}
\xymatrix{
&&0&0&\\
0\ar[r]&\sO_X(2)\ar[r]&\Omega^1_{\sX|X}(2)\ar[u]\ar[r]&\Omega^1_X(2)\ar[u]\ar[r]&0\\
0\ar[r]&\sO_X(2)\ar[r]\ar@{=}[u]&\sG\ar[r]\ar[u]&\Omega^1_{\mG|X}(2)\ar[u]\ar[r]&0\\
&&\sO_X(2-a)\ar[u]\ar@{=}[r]&\sO_X(2-a)\ar[u]&\\
&&0\ar[u]&0.\ar[u]&}
\end{equation} By hypothesis our deformation comes from $H^0(\mG,\sO_\mG(a))$, then the horizontal sequence completing diagram (\ref{diagramma6}) is associated to the zero element of $H^1(X,\Theta_{\mG|X})$. Therefore we have the splitting of the second row and a map $\phi$ as follows
\begin{equation}
\label{diagramma7}
\xymatrix{
&&0&0&\\
0\ar[r]&\sO_X(2)\ar[r]&\Omega^1_{\sX|X}(2)\ar[u]\ar[r]&\Omega^1_X(2)\ar[u]\ar[r]&0\\
0\ar[r]&\sO_X(2)\ar[r]\ar@{=}[u]&\sO_X(2)\oplus\Omega^1_{\mG|X}(2)\ar[r]\ar[u]&\Omega^1_{\mG|X}(2)\ar[u]\ar[r]\ar[ul]^{\phi}&0\\
&&\sO_X(2-a)\ar[u]\ar@{=}[r]&\sO_X(2-a)\ar[u]&\\
&&0\ar[u]&0.\ar[u]&}
\end{equation} 
Note that $\det(\Omega^1_{\mG|X}(2))\cong \Omega_X^{N-1}(-a+2N)$ and $\det(\Omega^1_{\sX|X}(2))\cong \Omega^{N-1}_X(2N)$.
\begin{prop}
\label{multip}
The map
\begin{equation*}
\phi^n\colon H^0(X,\Omega_X^{N-1}(-a+2N))\to H^0(X,\Omega_X^{N-1}(2N))
\end{equation*} is given by the section $R_{|X}\in H^0(X,\sO_X(a))$.
\end{prop}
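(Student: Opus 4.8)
The plan is to identify $\phi^{n}$, which is the determinant homomorphism $\det\phi\colon\det(\Omega^{1}_{\mG|X}(2))\to\det(\Omega^{1}_{\sX|X}(2))$, with multiplication by $R_{|X}$. By the two isomorphisms recorded just before the statement, the source and target of this map are the line bundles $\Omega^{N-1}_{X}(-a+2N)$ and $\Omega^{N-1}_{X}(2N)$, so $\det\phi$ is multiplication by a well-defined section of $\sHom(\det(\Omega^{1}_{\mG|X}(2)),\det(\Omega^{1}_{\sX|X}(2)))\cong\sO_{X}(a)$, and the claim is to pin down that section.

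The first step is a purely formal reduction. By its construction in (\ref{diagramma7}) the map $\phi$ is a lift of the twisted conormal surjection $q\colon\Omega^{1}_{\mG|X}(2)\to\Omega^{1}_{X}(2)$, i.e. $p\circ\phi=q$, where $p$ denotes the surjection in (\ref{estensionedivisoreduedue}); such a lift exists because $\mathrm{Ext}^{1}(\Omega^{1}_{\mG|X}(2),\sO_{X}(2))\cong H^{1}(X,\Theta_{\mG|X})=0$, the vanishing already exploited in the proof of Proposition \ref{etutto}, and any two lifts differ by an element of $\Hom(\Omega^{1}_{\mG|X}(2),\sO_{X}(2))\cong H^{0}(X,\Theta_{\mG|X})$, whose image in $H^{0}(X,\sO_{X}(a))$ lies in the image of the pseudo-Jacobi ideal $\sJ_{\sO_\mG(a),\sigma}$; so the statement is independent of this choice once read modulo $\sJ_{\sO_\mG(a),\sigma}$, and I would fix the lift coming from the splitting of the second row of (\ref{diagramma6}). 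Since $p\circ\phi=q$, the map $\phi$ sends $\ker q=\sO_{X}(2-a)$ into $\ker p=\sO_{X}(2)$, hence restricts to a homomorphism $\psi\colon\sO_{X}(2-a)\to\sO_{X}(2)$, that is, to a section of $\sO_{X}(a)$; moreover $(\psi,\phi,\mathrm{id}_{\Omega^{1}_{X}(2)})$ is a morphism from the twisted conormal sequence $0\to\sO_{X}(2-a)\to\Omega^{1}_{\mG|X}(2)\to\Omega^{1}_{X}(2)\to0$ (the right-hand column of (\ref{diagramma7})) to the sequence (\ref{estensionedivisoreduedue}). Taking top exterior powers of the two sequences and using the multiplicativity of the determinant along a morphism of short exact sequences, one finds that under the identifications $\det(\Omega^{1}_{\mG|X}(2))\cong\sO_{X}(2-a)\otimes\det(\Omega^{1}_{X}(2))$ and $\det(\Omega^{1}_{\sX|X}(2))\cong\sO_{X}(2)\otimes\det(\Omega^{1}_{X}(2))$ the map $\det\phi$ is $\psi\otimes\mathrm{id}$; in other words $\phi^{n}$ is multiplication by $\psi$, and it remains to show that $\psi=R_{|X}$ up to a nonzero scalar.

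This last point I would check locally on $\mG$. On an open set $U$ with coordinates $y_{1},\dots,y_{N}$, choose frames $\ell$ of $\sO_{\mG}(a)$ and $m$ of $\sO_{\mG}(2)$ and write $\sigma=f\ell$, $R=g\ell$; then $X\cap U=\{f=0\}$ and the first-order family attached to $\xi$ is $\{f+\epsilon g=0\}\subset U\times\mathrm{Spec}\,\mathbb{C}[\epsilon]/(\epsilon^{2})$. Over $U$ the sheaf $\Omega^{1}_{\mG|X}$ is free on $dy_{1},\dots,dy_{N}$, the conormal inclusion sends the generator of $\ker q$ to $\big(\sum_{i}(\partial f/\partial y_{i})\,dy_{i}\big)\otimes m$, and $\Omega^{1}_{\sX|X}$ is the quotient of the free module on $dy_{1},\dots,dy_{N},d\epsilon$ by the single relation $\sum_{i}(\partial f/\partial y_{i})\,dy_{i}+g\,d\epsilon=0$ coming from $d(f+\epsilon g)$ restricted to $\epsilon=0$; here $\sO_{X}(2)\hookrightarrow\Omega^{1}_{\sX|X}(2)$ is $m\mapsto d\epsilon\otimes m$, and $\phi$ sends $dy_{i}\otimes m$ to the class of $dy_{i}\otimes m$. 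Feeding the conormal generator into $\phi$ and using the relation, its image is $-g\,d\epsilon\otimes m$, so $\psi=-g=-R_{|X}$ in these frames; since frames patch, $\psi=\pm R_{|X}$ globally, which together with the previous step gives the proposition.

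The only ingredient that is not mere bookkeeping with determinants is the local identification $\psi=\pm R_{|X}$; it is exactly the assertion that the Kodaira--Spencer class $\xi$ of the family $(\sigma+\epsilon R=0)$ is represented by $R$, equivalently that (\ref{estensionedivisoreduedue}) is the push-out of the twisted conormal sequence along multiplication by $R_{|X}\colon\sO_{X}(2-a)\to\sO_{X}(2)$ --- which is built into the way the deformations of $X$ in $\mG$ were parametrised in Proposition \ref{etutto} and diagram (\ref{diagramma7}). I expect this step, making that identification precise and compatible with all the twists in (\ref{diagramma7}), to be the main (if modest) obstacle.
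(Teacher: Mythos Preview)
Your proof is correct and follows essentially the same approach as the paper: both rest on the local relation $d(f+\epsilon g)\big|_{X}=0$, which the paper writes as $dy=-r\,dt$ in coordinates $x_{1},\dots,x_{N-1},y$ adapted so that $X=\{y=0\}$, and then reads off directly that $c\,dx_{1}\wedge\cdots\wedge dx_{N-1}\wedge dy\mapsto -rc\,dx_{1}\wedge\cdots\wedge dx_{N-1}\wedge dt$. Your formal reduction to the map $\psi$ on kernels and the remarks on the ambiguity of the lift $\phi$ are sound but go beyond what the paper's four-line local computation records.
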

\begin{proof}
This is a local computation. Take on $\mG$ local coordinates $x_1,\ldots,x_{N-1},y$ such that $X$ is given by $y=0$. Then locally the deformation of $X$ is given by $y+tr=0$, where $r$ is a local equation of $R$. From $d(y+tr)=0$ we obtain on $X$ that $dy=-rdt$. Hence if a section of $H^0(X,\det(\Omega^1_{\mG|X}))$ is locally given by $c\cdot dx_1\wedge\ldots\wedge dx_{N-1}\wedge dy$, then its image in $H^0(X,\det(\Omega^1_{\sX|X}))$ is $-rcdx_1\wedge\ldots\wedge dx_{N-1}\wedge dt$. Tensoring by $\sO_{X}(2N)$ gives our thesis.
\end{proof}

Now we construct totally decomposable twisted volume forms. Consider $N$ global sections $\eta_1,\ldots,\eta_N\in H^0(X,\Omega^1_X(2))$ which, by Lemma \ref{elenco} part (7), have unique liftings $\tilde{s}_1,\ldots,\tilde{s}_N\in H^0(X,\Omega^1_{\mG|X}(2))$. Call $\widetilde{\Omega}\in H^0(X,\Omega_X^{N-1}(-a+2N))$ the generalized adjoint form corresponding to $\tilde{s}_1\wedge\dots\wedge \tilde{s}_N$. If we take $s_1:=\phi(\tilde{s}_1),\ldots,s_N:=\phi(\tilde{s}_N)\in H^0(X,\Omega^1_{\sX|X}(2))$, we have that the generalized adjoint $\Omega\in H^0(X,\det (\Omega^1_{\sX|X}(2)))=H^0(X,\Omega_X^{N-1}(2N))$ corresponding to $s_1\wedge\dots\wedge s_N$ is $\Omega=\widetilde{\Omega}\cdot R$. We point out that $\widetilde{\Omega}$ does not depend on the deformation $\xi$, while $\Omega$ obviously does.

\begin{thm}
\label{teoxi}
Assume that $W=\left\langle \eta_1\ldots,\eta_N\right\rangle$ is a generic subspace in $H^0(X,\Omega^1_X(2))$ with $\lambda^NW\neq0$.
Then $R$ is in the pseudo-Jacobi ideal $\sJ_{\sO_\mG(2),s}$ if and only if the adjoint form $\Omega$ is in the image of $H^0(X,\sO_X(2))\otimes \lambda^NW\to H^0(X,\Omega_X^{N-1}(2N))$.
\end{thm}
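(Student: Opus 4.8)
The proof will be a translation exercise between two ways of describing the same obstruction: the Generalized Adjoint Theorem \ref{theoremA} applied to the twisted sequence \eqref{estensionedivisoreduedue} on one side, and the pseudo-Jacobi ideal $\sJ_{\sO_\mG(2),\sigma}$ of Section \ref{sezione1} on the other. Recall from the construction preceding the statement that $\Omega=\widetilde\Omega\cdot R$, where $\widetilde\Omega\in H^0(X,\Omega_X^{N-1}(-a+2N))$ is the generalized adjoint built from the canonical liftings $\tilde s_i\in H^0(X,\Omega^1_{\mG|X}(2))$ and is \emph{independent} of $\xi$, while $R\in H^0(\mG,\sO_\mG(a))$ is the polynomial inducing $\xi$. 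Since $W$ is generic with $\lambda^NW\neq0$, by Lemma \ref{elenco}(8) we have $D_W=0$, so Remark \ref{zerozero} applies: the condition $\Omega\in\Ima(H^0(X,\sO_X(2))\otimes\lambda^NW\to H^0(X,\det(\Omega^1_{\sX|X}(2))))$ is, by Theorem \ref{theoremA}, \emph{equivalent} to the vanishing of $\xi$ (or rather of its image under the cup product, which by \eqref{estensionedivisoreduedue} is controlled by $\partial_\xi\colon H^0(X,\Omega^1_X(2))\to H^1(X,\sO_X(2))$ — but that map is zero, so one must be careful: the relevant class is $\xi\in H^1(X,\Theta_X)$ itself, detected on $W$). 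So the right-hand side of the iff is equivalent to $\xi=0$ in $H^1(X,\Theta_X)$, hence by Proposition \ref{etutto} to $R\in\sJ_{\sO_\mG(a),\sigma}=\sR_{\sO_\mG(a),\sigma}$'s defining ideal.

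\textbf{Step 1: reduce the adjoint condition to a statement about $\xi$.} Apply Theorem \ref{theoremA} to the sequence \eqref{estensionedivisoreduedue}, with $\sL=\sO_X(2)$, $\sF=\Omega^1_X(2)$, $\sE=\Omega^1_{\sX|X}(2)$. Since $\lambda^NW\neq0$ and, by Lemma \ref{elenco}(8), $\Omega^1_X(2)$ is globally generated so $D_W=0$, the hypothesis $H^0(X,\sL)\cong H^0(X,\sL(D_W))$ is trivially satisfied, and the theorem becomes a biconditional: $\Omega\in\Ima(H^0(X,\sO_X(2))\otimes\lambda^NW\to\cdots)$ iff $\xi$ lies in $\ker(H^1(X,(\Omega^1_X(2))^\vee\otimes\sO_X(2))\to H^1(X,(\Omega^1_X(2))^\vee\otimes\sO_X(2)(D_W)))=\ker(\mathrm{id})$, i.e. iff $\xi=0$ — but one must check that genericity of $W$ means no information is lost, i.e. that $W$ generic detects $\xi$; this is where I invoke that $W$ is an $N$-dimensional subspace of the $\geq N$-dimensional $H^0(X,\Omega^1_X(2))$ and that the pairing with $\xi$ is nondegenerate for generic $W$ precisely because of the infinitesimal Torelli statement Theorem \ref{torotoro}.

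\textbf{Step 2: identify $\sJ_{\sO_\mG(2),\sigma}$ with the triviality locus.} Here one must reconcile the twist: the statement mentions $\sJ_{\sO_\mG(2),s}$ (degree-$2$ piece) whereas $\xi$ lives in $R_{\sO_\mG(a),\sigma}$ (degree-$a$ piece). The resolution is via Proposition \ref{multip} and the factorization $\Omega=R\cdot\widetilde\Omega$: multiplication by $R$ carries the degree-$2$ pseudo-Jacobi relations into the degree-$(2N)$ form $\Omega$, and conversely $\Omega$ being in the image of $H^0(X,\sO_X(2))\otimes\lambda^NW$ means $R\widetilde\Omega=\sum c_j\cdot\omega_j$ with $c_j\in H^0(X,\sO_X(2))$; pushing this relation back to $\mG$ (using the surjectivity/projective-normality facts from the proof of Theorem \ref{torotoro} and the vanishings in Lemma \ref{elenco}) shows it is equivalent to $R$ being a combination of the $\widetilde{d\sigma}$-contractions, i.e. $R\in\sJ_{\sO_\mG(a),\sigma}$, which via the graded ring structure is governed by the degree-$2$ relations $\sJ_{\sO_\mG(2),\sigma}$. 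Combining Steps 1 and 2 (both directions) gives the equivalence.

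\textbf{Main obstacle.} The genuinely delicate point is Step 2: matching the \emph{twist} in the pseudo-Jacobi ideal (degree $2$) with the adjoint condition (which naturally produces a relation in degree $2N$ for the form $\Omega$), and checking that the relation $R\widetilde\Omega\in\langle\omega_1,\dots,\omega_N\rangle\cdot H^0(X,\sO_X(2))$ on $X$ lifts faithfully to a Jacobi-type relation for $R$ on $\mG$. This requires the cohomology vanishings of Lemma \ref{elenco} (especially parts (3), (4), (6)) to ensure that every restriction map in sight is an isomorphism or surjection, so that no relation is created or destroyed in passing between $\mG$ and $X$; the hypothesis $a>l$ (hence $a\geq l$, used in Proposition \ref{etutto}) is exactly what makes all the needed twists positive. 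The comparison between $R\in\sJ_{\sO_\mG(a),\sigma}$ and membership governed by $\sJ_{\sO_\mG(2),\sigma}$ will presumably be packaged through the generalized Macaulay theorems \ref{macaulaygen}–\ref{macaulaygenc} announced in the introduction, but at this point in the paper a direct argument via Proposition \ref{multip} and the globally-generated sheaf $\Omega^1_\mG(2)$ should suffice.
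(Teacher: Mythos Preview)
Your Step~1 is correct and is exactly the paper's argument: by Lemma~\ref{elenco}(8) the sheaf $\Omega^1_X(2)$ is globally generated, so for generic $W$ one has $D_W=0$ (via \cite[Proposition 3.1.6]{PZ}); then Theorem~\ref{theoremA} applied to the twisted sequence gives the biconditional $\Omega\in\Ima(H^0(X,\sO_X(2))\otimes\lambda^NW\to\cdots)\iff \xi\in\ker(\mathrm{id})=0$. Your subsequent worry that ``genericity must detect $\xi$'' and the appeal to Theorem~\ref{torotoro} are unnecessary: the class appearing in Theorem~\ref{theoremA} is the \emph{full} extension class $\xi\in H^1(X,\Theta_X)$ (via $\text{Ext}^1(\Omega^1_X(2),\sO_X(2))\cong\text{Ext}^1(\Omega^1_X,\sO_X)$), not some restriction to $W$, so once $D_W=0$ the conclusion $\xi=0$ is immediate.

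Your Step~2 is a detour caused by taking the ``$\sJ_{\sO_\mG(2),s}$'' in the statement literally. This is a typo for $\sJ_{\sO_\mG(a),\sigma}$: the section $R$ lives in $H^0(\mG,\sO_\mG(a))$, and the relevant pseudo-Jacobi ideal is the degree-$a$ one that appears in Proposition~\ref{etutto}. With that correction, Step~2 collapses to a single sentence: by Proposition~\ref{etutto}, $R\in\sJ_{\sO_\mG(a),\sigma}$ if and only if $[R]=0$ in $R_{\sO_\mG(a),\sigma}\cong H^1(X,\Theta_X)$, i.e.\ if and only if $\xi=0$. The paper's proof accordingly uses neither Proposition~\ref{multip}, nor any lifting from $X$ to $\mG$, nor the generalized Macaulay theorems; those tools enter only later, in Lemma~\ref{quattro}, where one must genuinely compare relations on $X$ and on $\mG$. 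So your ``main obstacle'' is not an obstacle for this theorem at all.
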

\begin{proof}
By Lemma \ref{elenco} $(8)$ it follows that $\Omega^1_X(2)$ is generated by its global sections and that $D_{\Omega^1_X(2)}=0$. By \cite[Proposition 3.1.6]{PZ} it follows that $D_W=0$ since $W$ is generic. 
Hence Theorem \ref{theoremA} gives an equivalence. Thus if $\Omega\in \Ima H^0(X,\sO_X(2)\otimes \lambda^NW\to H^0(X,\Omega_X^{N-1}(2N)))$, then 
$\xi\in\ker(H^1(X,\Theta_{X}(2))\to H^1(X,\Theta_{X}(2)\otimes\sO_{X}(D_W)))$, that is $\xi=0$ since $D_W=0$. Viceversa 
 if $R$ is in the pseudo-Jacobi ideal, then the deformation $\xi$ is zero. In particular $\xi\in\ker(H^1(X,\Theta_{X}(2))\to H^1(X,\Theta_{X}(2)\otimes\sO_{X}(D_W)))$ and since $D_W=0$ 
 by the viceversa of Theorem \ref{theoremA} it follows $\Omega\in \Ima H^0(X,\sO_X(2)\otimes \lambda^NW\to H^0(X,\Omega_X^{N-1}(2N)))$.
\end{proof}

\subsection{The Generalized Macaulay's theorem}
We will prove a generalization of Macaulay's theorem which makes explicit in the Grassmannian case the results of \cite[Theorem 2.15]{green1} and of \cite[Theorem page 47]{green2}.
We start with the following lemma which depends on various results contained in \cite{snow}.
\begin{lem}
\label{lemmac}
Call $\Sigma$ the sheaf of differential operators introduced in Section \ref{sezione1}. The cohomology groups $$H^i(\mG,\bigwedge^k\Sigma(-(k-c)a))\quad i\neq0,N$$ vanish for $a\geq l+c+2, c\geq0$. 
\end{lem}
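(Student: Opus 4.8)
The plan is to reduce the vanishing statement to known cohomology computations on the Grassmannian by resolving the bundle $\bigwedge^k\Sigma$ in terms of sums of $\bigwedge^p\Theta_\mG$ twisted by line bundles, and then invoking Snow's results on the cohomology of homogeneous bundles on $\mG$. First I would recall the prolongation sequence $0\to\sO_\mG\to\Sigma\to\Theta_\mG\to0$ from \eqref{prolongation}. Taking exterior powers, this yields the standard filtration of $\bigwedge^k\Sigma$ whose graded pieces are $\bigwedge^k\Theta_\mG$ and $\bigwedge^{k-1}\Theta_\mG$; concretely there is a short exact sequence
\begin{equation*}
0\to\bigwedge^{k-1}\Theta_\mG\to\bigwedge^k\Sigma\to\bigwedge^k\Theta_\mG\to0.
\end{equation*}
Twisting by $\sO_\mG(-(k-c)a)$ and taking the long exact sequence in cohomology, the desired vanishing of $H^i(\mG,\bigwedge^k\Sigma(-(k-c)a))$ for $i\neq0,N$ follows once we know that both
\begin{equation*}
H^i\!\left(\mG,\textstyle\bigwedge^{k}\Theta_\mG(-(k-c)a)\right)=0 \quad\text{and}\quad H^i\!\left(\mG,\textstyle\bigwedge^{k-1}\Theta_\mG(-(k-c)a)\right)=0
\end{equation*}
for $i\neq 0,N$; note the induction also cleanly handles the $H^0$ and $H^N$ slots since those are exactly the indices we are allowed to leave alone.

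Next I would pass from $\bigwedge^p\Theta_\mG$ to the cotangent side via duality: $\bigwedge^p\Theta_\mG\cong\Omega_\mG^{N-p}\otimes K_\mG^{-1}=\Omega_\mG^{N-p}(l+1)$, using $K_\mG=\sO_\mG(-l-1)$. Hence the two vanishings above become statements about $H^i(\mG,\Omega_\mG^{N-p}(l+1-(k-c)a))$ for $p=k$ and $p=k-1$, i.e. about $\Omega_\mG^{j}$ twisted by a negative line bundle whose twist is at most $l+1-a$ (since $k-c\geq 1$ in the range where the twist is nontrivial) and more negative as $k-c$ grows. By Serre duality this is $h^{N-i}(\mG,\Omega_\mG^{N-j}((k-c)a-l-1))$, a positive twist of a bundle of holomorphic forms. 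The key input is Snow's vanishing theorem (\cite[Theorem page 171]{snow}, used repeatedly above): $H^q(\mG,\Omega_\mG^j(t))=0$ for $q\neq 0$ as soon as $t$ is large enough relative to $j$ — more precisely $t> j$ suffices on $G(s,l+1)$ in the relevant chamber, and here $t=(k-c)a-l-1\geq a-l-1$. The hypothesis $a\geq l+c+2$ is designed exactly so that $(k-c)a-l-1$ clears the threshold needed by Snow's criterion for every $j$ that occurs; I would check the inequality in the worst case $k-c=1$, where the twist equals $a-l-1\geq c+1\geq 1$, and observe it only improves for larger $k-c$.

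The main obstacle is bookkeeping: Snow's vanishing for $\Omega_\mG^j(t)$ is not simply "$t\geq1$'' but depends on $j$ through the combinatorics of the partition/staircase attached to the homogeneous bundle, so one must verify that the specific twist $(k-c)a - l - 1$ (equivalently the Serre-dual twist) lands in the region where all higher cohomology of $\Omega_\mG^{N-j}$ vanishes for all $j$ arising from $p\in\{k-1,k\}$. The clean way to handle this is to note that the twist is at least $a-l-1$ and that $a\geq l+c+2$ forces $a-l-1\geq c+1$; then one cites Snow's explicit description to see that a twist $\geq 1$ above $K_\mG^{-1}$-normalization, uniformly in $j$, already gives the vanishing outside degrees $0$ and $N$ — and in fact degree $N$ only survives via Serre duality from degree $0$, which is why the statement excludes precisely $i=0,N$. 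No genuinely new idea is needed beyond the filtration of $\bigwedge^k\Sigma$ plus careful application of the already-cited results of \cite{snow}; the rest is routine induction on $k$ and an inequality check.
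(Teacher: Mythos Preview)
Your approach is the same as the paper's: filter $\bigwedge^k\Sigma$ via the short exact sequence coming from \eqref{prolongation}, rewrite $\bigwedge^p\Theta_\mG$ as $\Omega_\mG^{N-p}(l+1)$, and reduce everything to Snow's tables. Two points, however, need tightening.

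First, you only treat the regime $k-c\geq 1$. Your parenthetical ``since $k-c\geq 1$ in the range where the twist is nontrivial'' is not right: for $k<c$ the twist $-(k-c)a$ is a \emph{positive} multiple of $a$, and for $k=c$ it is zero; these cases must be handled separately. They are easy (the twist $l+1-(k-c)a$ is then $>l$, so Snow gives the vanishing directly), and the paper does exactly this case split $k=c$, $k<c$, $k>c$---but you should say so rather than suppress them.

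Second, and more seriously, your resolution of the ``main obstacle'' is not correct as stated. You assert that after Serre duality a twist $\geq 1$ ``uniformly in $j$'' already kills all intermediate cohomology of $\Omega_\mG^j$. That is false: Snow's vanishing depends on both the twist $t$ and the form degree $j$, and small positive twists can leave nonzero cohomology in intermediate degrees. What actually happens in the worst case $k-c=1$ is that the Serre-dual groups are $H^{N-i}(\mG,\Omega_\mG^{k-1}(a-l-1))$ and $H^{N-i}(\mG,\Omega_\mG^{k}(a-l-1))$ with $k=c+1$, so the form degrees are $c$ and $c+1$ while the twist is $a-l-1\geq c+1$. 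It is precisely this matching between the twist bound $c+1$ and the form degrees $c,c+1$ that makes Snow's criterion (specifically \cite[Theorem p.~171 (4)]{snow}) apply, and it is exactly why the hypothesis reads $a\geq l+c+2$ rather than merely $a>l$. Your write-up should make this dependence explicit rather than claim a uniform bound that does not hold.
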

\begin{proof}
Take the exact sequence
\begin{equation}
0\to \sO_\mG\to\Sigma\to \Theta_\mG\to0
\label{sigma}
\end{equation} and its wedge product
\begin{equation}
\label{sigmawedge}
0\to \bigwedge^{k-1}\Theta_\mG\to\bigwedge^{k}\Sigma\to\bigwedge^{k} \Theta_\mG\to0.
\end{equation} We proceed by steps.

\emph{Case k=c.} We have to prove that $H^i(\mG,\bigwedge^k\Sigma)=0$ for $i\neq0,N$. Using sequence (\ref{sigmawedge}) it is enough to prove that $H^i(\mG,\bigwedge^{k-1}\Theta_\mG)=H^i(\mG,\bigwedge^{k}\Theta_\mG)=0$. Note that $H^i(\mG,\bigwedge^{k-1}\Theta_\mG)\cong H^i(\mG,\Omega^{N-k+1}_\mG(l+1))$ and $H^i(\mG,\bigwedge^{k}\Theta_\mG)\cong H^i(\mG,\Omega^{N-k}_\mG(l+1))$ hence the claim follows by \cite[Page 171]{snow}.

\emph{Case $k<c$.} By sequence (\ref{sigmawedge}) twisted by $-(k-c)a$, it is enough to prove that $$H^i(\mG,\bigwedge^{k-1}\Theta_\mG(-(k-c)a))=H^i(\mG,\bigwedge^{k}\Theta_\mG(-(k-c)a))=0.$$ We have the isomorphisms $H^i(\mG,\bigwedge^{k-1}\Theta_\mG(-(k-c)a))\cong H^i(\mG,\Omega_\mG^{N-k+1}(-(k-c)a+l+1))$ and $H^i(\mG,\bigwedge^{k}\Theta_\mG(-(k-c)a))\cong H^i(\mG,\Omega_\mG^{N-k}(-(k-c)a+l+1))$. Since $-(k-c)a+l+1>l$ we conclude again by \cite[Page 171]{snow}.

\emph{Case $k>c$.} Working as it the previous cases we have to prove that 
$$H^i(\mG,\bigwedge^{k-1}\Theta_\mG(-(k-c)a))=H^i(\mG,\bigwedge^{k}\Theta_\mG(-(k-c)a))=0.$$ By Serre duality we will work with the duals
$H^{N-i}(\mG,\Omega_\mG^{k-1}((k-c)a-l-1))$ and $H^{N-i}(\mG,\Omega_\mG^{k}((k-c)a-l-1))$. If $k-c\geq 2$, it immediately follows that $(k-c)a-l-1>l$ and we conclude as in the previous cases. If $k-c=1$, the vanishing of $H^{N-i}(\mG,\Omega_\mG^{k-1}(a-l-1))$ and $H^{N-i}(\mG,\Omega_\mG^{k}(a-l-1))$ follows from \cite[Theorem page 171 (4)]{snow} and the condition $a\geq l+c+2$.
\end{proof}

\begin{rmk}
Note that the condition $a\geq l+c+2$ is really needed only for $k-c=1$. In all the other cases $a>l$ is enough.
\end{rmk}

\begin{thm}[Generalized Macaulay's theorem for Grassmannians]
\label{macaulaygen}

Let $\mG$ and $(\sigma=0)=X\in |aH|$ as above. Then
\begin{enumerate}
	\item $R_{K_\mG^2((N+1)a),\sigma}\cong \mC$;
	\item the standard multiplication map 
	\begin{equation}
	R_{\sO_\mG(ca),\sigma}\otimes R_{K_\mG^2((N+1-c)a),\sigma}\to R_{K_\mG^2((N+1)a),\sigma}\cong \mC.
	\end{equation} 
	is a perfect pairing provided that $a\geq l+c+2$.
\end{enumerate}
\end{thm}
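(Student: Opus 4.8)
The plan is to reduce the statement to the classical Macaulay theorem on the projective space $\mP^M$ into which $\mG$ embeds by the Plücker embedding, exactly in the same spirit as the proof of Theorem \ref{torotoro}. The point is that the pseudo-Jacobi ring $\sR$ of the pair $(\mG,X)$ is governed by the sheaf $\Sigma=\Sigma_{\sO_\mG(a)}$, and Lemma \ref{lemmac} provides precisely the acyclicity needed to make the Koszul-type complex built from $\Sigma$ behave, in cohomology, like the Koszul complex of the partial derivatives of $F$ on $\mP^M$. Concretely, I would first note that the generalized Jacobi ring piece $R_{E,\sigma}$ is the cokernel of $H^0(\mG,E\otimes\Sigma(-a))\xrightarrow{\widetilde{d\sigma}}H^0(\mG,E)$, and that iterating the contraction by $\widetilde{d\sigma}$ one gets, for each $E$, a complex whose terms are $H^0(\mG,E\otimes\bigwedge^k\Sigma(-ka))$. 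Lemma \ref{lemmac} (with the appropriate value of $c$) guarantees that all intermediate cohomology $H^i(\mG,\bigwedge^k\Sigma(-(k-c)a))$, $i\neq 0,N$, vanishes, so that this complex is a resolution computing $R$ up to the top cohomology $H^N$, which by Serre duality is one-dimensional.

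For part (1), $R_{K_\mG^2((N+1)a),\sigma}\cong\mC$, I would take $E=K_\mG^2((N+1)a)$ and run the Koszul complex associated to $\widetilde{d\sigma}$ all the way down: the relevant twist at the $k$-th step is $K_\mG^2((N+1-k)a)$, and tensoring the complex $0\to\bigwedge^N\Sigma(-Na)\to\cdots\to\Sigma(-a)\to\sO_\mG\to 0$ by this line bundle, Lemma \ref{lemmac} applied with $c=N+1$ (observe $a\geq l+c+2$ is \emph{not} needed here since $k-c=-1<0$ throughout, so $a>l$ suffices) shows the complex is exact on $H^0$ except at the last spot, where the cokernel is $H^N(\mG,K_\mG^2((N+1-N)a)\otimes\bigwedge^N\Sigma(-Na))$; unwinding, $\bigwedge^N\Sigma$ surjects onto $\bigwedge^N\Theta_\mG=K_\mG^{-1}(\text{something})$ — more precisely one uses $\det\Sigma=\det\Theta_\mG=K_\mG^{-1}$ up to the twist bookkeeping from (\ref{prolongation}) — and Serre duality on $\mG$ gives $H^N(\mG,\sO_\mG)^\vee=\mC$. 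The projective normality of $\mG$ in $\mP^M$ (used already in Theorem \ref{torotoro}) lets one alternatively see this as the degree-$(N+1)$-socle statement of Macaulay on $\mP^M$ restricted to the Plücker cone; either route gives the one-dimensionality.

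For part (2), the perfect pairing, the strategy is the standard one for Gorenstein-type Artinian rings: surjectivity of the multiplication $R_{\sO_\mG(ca),\sigma}\otimes R_{K_\mG^2((N+1-c)a),\sigma}\to\mC$ is automatic from part (1) once one shows the pairing is nondegenerate on one side, and nondegeneracy is precisely the statement that for $0\neq[P]\in R_{\sO_\mG(ca),\sigma}$ there exists $Q$ with $PQ\notin\sJ_{K_\mG^2((N+1)a),\sigma}$. I would prove this by dualizing the relevant piece of the Koszul/Jacobi complex: the pairing $R_{\sO_\mG(ca),\sigma}\times R_{K_\mG^2((N+1-c)a),\sigma}\to R_{K_\mG^2((N+1)a),\sigma}$ is identified, via Serre duality on $\mG$ and the self-duality of the Koszul complex of $\widetilde{d\sigma}$ twisted by $K_\mG$, with the natural pairing between a cohomology group and its dual; the hypothesis $a\geq l+c+2$ is exactly what Lemma \ref{lemmac} needs (the borderline case $k-c=1$) to guarantee that the two complexes being compared have no spurious intermediate cohomology, so that the duality is perfect in the required degrees. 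This is the analogue of the proof of \cite[Theorem 2.15]{green1} / \cite[Theorem page 47]{green2}, specialized to $\mG$ with the explicit vanishing range made available by Snow's computations.

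The main obstacle I expect is \textbf{the bookkeeping in part (2)}: one must set up the self-duality of the Jacobi complex built from $\Sigma$ carefully — $\Sigma$ is not $\bigoplus\sO(1)$ as in the projective case, so $\det\Sigma$, the pairing $\bigwedge^k\Sigma\otimes\bigwedge^{N-k}\Sigma\to\bigwedge^N\Sigma$, and the twist $K_\mG$ must all be tracked precisely — and then verify that the resulting Serre-duality pairing on $H^0$-cokernels is exactly the multiplication map of $\sR$, not merely proportional to it up to an unidentified correction. The vanishing inputs are handed to us by Lemma \ref{lemmac} with the stated bound $a\geq l+c+2$; the real work is the homological algebra identifying the multiplication with a perfect duality pairing, i.e.\ checking that the spectral sequence / double complex degenerates in the needed range so that no extension problems obstruct the isomorphism $R_{K_\mG^2((N+1)a),\sigma}\cong\mC$ from being compatible with the product. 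Everything else (the reduction to $\mG$, the Kodaira/Nakano/Serre vanishings, projective normality) is routine given the lemmas already established.
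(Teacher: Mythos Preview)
Your ingredients are the right ones --- the Koszul complex of $\widetilde{d\sigma}$, the vanishing from Lemma \ref{lemmac}, and Serre duality --- but the organization is both messier than the paper's and contains some genuine slips.

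First, two concrete errors in your sketch of part (1). The sheaf $\Sigma$ has rank $N+1$, so the Koszul complex terminates at $\bigwedge^{N+1}\Sigma(-(N+1)a)$, not $\bigwedge^N\Sigma(-Na)$; this matters because it is precisely the $(N+1)$-st piece, identified with $\bigwedge^N\Theta_\mG$, whose $H^N$ produces the socle. Second, your attempt to apply Lemma \ref{lemmac} ``with $c=N+1$'' to the twist $E=K_\mG^2((N+1)a)$ does not type-check: the lemma concerns twists of the form $\sO_\mG(-(k-c)a)$, and $K_\mG^2=\sO_\mG(-2l-2)$ is not a multiple of $a$ in general, so the indices do not line up and the claimed bound ``$a>l$ suffices'' for (1) is not justified by your argument as written. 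The opening suggestion of reducing to the classical Macaulay theorem on $\mP^M$ via projective normality is also a red herring: the pseudo-Jacobi ideal on $\mG$ is built from $\Sigma_{\sO_\mG(a)}$, not from the restriction of $\Sigma_{\sO_{\mP^M}(a)}$, and these differ.

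The paper avoids all of this by a single stroke: tensor the full Koszul complex by $\sO_\mG(ca)$ and take the \emph{hypercohomology spectral sequence}. Lemma \ref{lemmac} makes the $E_1$-page spherical (nonzero only in rows $q=0$ and $q=N$), so one reads off directly
\[
E_2^{N+1,0}=R_{\sO_\mG(ca),\sigma},\qquad E_2^{0,N}=R_{K_\mG^2((N+1-c)a),\sigma}^\vee
\]
(the second via Serre duality and $\bigwedge^{N+1}\Sigma\cong\bigwedge^N\Theta_\mG$), and exactness of the complex forces $d_{N+1}\colon E_{N+1}^{0,N}\to E_{N+1}^{N+1,0}$ to be an isomorphism. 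Part (1) then drops out by setting $c=0$, since $R_{\sO_\mG,\sigma}=\mC$ trivially; there is no need for a separate chase. The ``bookkeeping'' you flag as the main obstacle --- identifying the duality pairing with multiplication --- is handled by the observation that multiplication by a fixed $\nu\in H^0(\mG,K_\mG^2((N+1-c)a))$ induces a map of the whole spectral sequence, so compatibility is automatic. What you describe as self-duality of the Jacobi complex plus careful twist tracking is subsumed in this single spectral-sequence computation.
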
 
\begin{proof}
The proof is basically the same as the classical Macaulay's theorem; see \cite[Theorem 6.19]{Vo2}. Using the section $\widetilde{d\sigma}\in H^0(\mG,\Sigma^\vee(a))$ as in equation (\ref{diffs}), we have the Koszul complex: 
\begin{equation}
0\to \bigwedge^{N+1}\Sigma(-(N+1)a)\to\bigwedge^{N}\Sigma(-Na)\to\dots\to \Sigma(-a)\to\sO_\mG\to0. 
\label{koszul}
\end{equation} We tensor (\ref{koszul}) by $\sO_\mG(ca)$: 
\begin{equation}
0\to \bigwedge^{N+1}\Sigma(-(N+1-c)a)\to\bigwedge^{N}\Sigma(-(N-c)a)\to\dots\to \Sigma(-(1-c)a)\to\sO_\mG(ca)\to0. 
\label{koszul2}
\end{equation} Now look at the corresponding hypercohomology spectral sequence. The $E_1^{p,q}$ terms of this spectral sequence are
\begin{equation}
E^{p,q}_1=H^q(\mG,\bigwedge^{N+1-p}\Sigma(-(N+1-p-c)a)).
\end{equation} By Lemma \ref{lemmac}, $E_1^{p,q}=0$ for $q\neq0,N$. Hence it is a spherical spectral sequence with page $E_1$ as follows:

\begin{figure}[h]
\begin{tikzpicture}
	\draw [thick, <->] (0,6) -- (0,0) -- (8,0);
\node [below left] at (0,6) {$q$};
\node [above right] at (8,0) {$p$};
\draw[fill] (0,0) circle [radius=0.07] node[below]{$E_1^{0,0}$};
\draw[fill] (1,0) circle [radius=0.07] node[below]{$E_1^{1,0}$};
\draw[fill] (2,0) circle [radius=0.07] node[below]{$E_1^{2,0}$};
\node [below=0.2] at (3,0) {$\dots$};
\node [below=0.2] at (6,0) {$\dots$};

\draw[fill] (7,0) circle [radius=0.07] node[below]{$E_1^{N+1,0}$};
\draw[fill] (0,5) circle [radius=0.07] node[above right]{$E_1^{0,N}$};
\draw[fill] (1,5) circle [radius=0.07] node[above right]{$E_1^{1,N}$};
\draw[fill] (2,5) circle [radius=0.07] node[above right]{$E_1^{2,N}$};
\node at (3,5) {$\dots$};
\node at (6,5) {$\dots$};

%\draw[fill] (3,5) circle [radius=0.07];
%\draw[fill] (4,5) circle [radius=0.07];
%\draw[fill] (5,5) circle [radius=0.07];
%\draw[fill] (6,5) circle [radius=0.07];
\draw[fill] (7,5) circle [radius=0.07] node[above]{$E_1^{N+1,N}$};
%\draw[fill] (9,1) circle [radius=0.05];
%\node at (9,1) {\Huge$\dots$};
%\draw[fill] (2,2) circle [radius=0.05];
%\draw[fill] (3,2) circle [radius=0.05];
%\draw[fill] (4,2) circle [radius=0.05];
%\draw[fill] (5,2) circle [radius=0.05];
%\draw[fill] (6,2) circle [radius=0.05];
%\draw[fill] (7,2) circle [radius=0.05];
%\draw[fill] (8,2) circle [radius=0.05];
%%\draw[fill] (9,2) circle [radius=0.05];
%\node at (9,2) {\Huge$\dots$};
%\draw[fill] (3,3) circle [radius=0.05];
%\draw[fill] (4,3) circle [radius=0.05];
%\draw[fill] (5,3) circle [radius=0.05];
%\draw[fill] (6,3) circle [radius=0.05];
%\draw[fill] (7,3) circle [radius=0.05];
%\draw[fill] (4,4) circle [radius=0.05];
\draw[fill] (0,1) circle [radius=0.07] node[above right]{0};
\draw[fill] (1,1) circle [radius=0.07] node[above right]{0};
\draw[fill] (2,1) circle [radius=0.07] node[above right]{0};
\node at (3,1) {$\dots$};
\node at (6,1) {$\dots$};
\draw[fill] (7,1) circle [radius=0.07] node[above right]{0};

\draw[fill] (7,4) circle [radius=0.07] node[above right]{0};
\draw[fill] (0,4) circle [radius=0.07] node[above right]{0};
\draw[fill] (1,4) circle [radius=0.07] node[above right]{0};
\draw[fill] (2,4) circle [radius=0.07] node[above right]{0};
\node at (3,4) {$\dots$};
\node at (6,4) {$\dots$};
\draw[fill] (7,4) circle [radius=0.07] node[above right]{0};
\node at (1,2) {$\vdots$};
\node at (7,2) {$\vdots$};
\node at (1,3) {$\vdots$};
\node at (7,3) {$\vdots$};
\end{tikzpicture}
\caption{The page $E_1$ of the hypercohomology spectral sequence}
\end{figure}
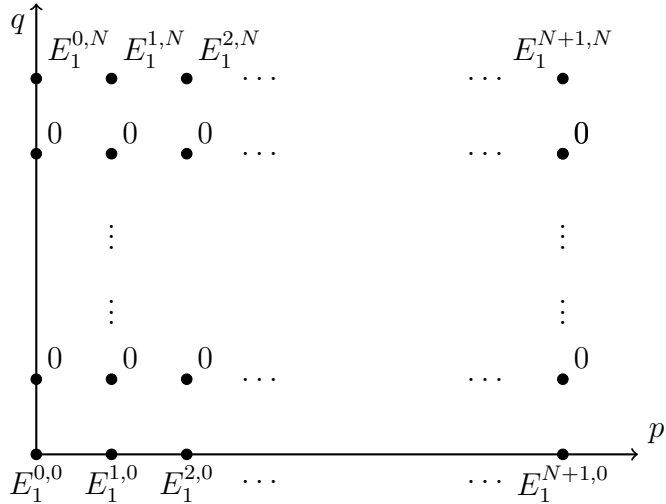
Since the differential is $d_1\colon E_1^{p,q}\to E_1^{p+1,q}$, it follows that 
\begin{equation}
E^{0,N}_2=\Ker (E_1^{0,N}\to E_1^{1,N})=\Ker (H^N(\mG,\bigwedge^{N+1}\Sigma(-(N+1-c)a))\to H^N(\mG,\bigwedge^{N}\Sigma(-(N-c)a))),
\end{equation} that is, using Serre duality and the isomorphism $\bigwedge^{N+1}\Sigma\cong \bigwedge^{N}\Theta$
\begin{equation}
E^{0,N}_2=(H^0(\mG,K_\mG^2((N+1-c)a))/\Ima H^0(\mG,\Sigma\otimes K_\mG^2((N-c)a)))^\vee=R_{K_\mG^2((N+1-c)a),\sigma}^\vee.
\end{equation} Furthermore
\begin{equation}
E^{N+1,0}_2=E_1^{N+1,0}/\Ima E_1^{N,0}=H^0(\mG,\sO_\mG(ca))/\Ima H^0(\mG,\Sigma(-(1-c)a))=R_{\sO_\mG(ca),\sigma}.
\end{equation}
The fact that the spectral sequence is spherical gives that $E^{0,N}_2=E^{0,N}_3=\dots=E^{0,N}_{N+1}$ and $E^{N+1,0}_2=E^{N+1,0}_3=\dots=E^{N+1,0}_{N+1}$. Moreover this spectral sequence abuts to the hypercohomology of (\ref{koszul2}) which is zero because (\ref{koszul2}) is exact. Hence $d_{N+1}$ is an isomorphism $d_{N+1}\colon E^{0,N}_{N+1}\to E^{N+1,0}_{N+1}$, that is 
\begin{equation}
R_{\sO_\mG(ca),\sigma}\cong R_{K_\mG^2((N+1-c)a),\sigma}^\vee.
\label{seqspe}
\end{equation}
Choosing $c=0$ gives part (1) of our thesis. Furthermore since the multiplication with $H^0(\mG,K_\mG^2((N+1-c)a))$ gives a map of the entire spectral sequence, we deduce that (\ref{seqspe}) is compatible with the multiplication in the sense that given $\nu\in H^0(\mG,K_\mG^2((N+1-c)a))$ we have a commutative diagram
\begin{equation}
\xymatrix{
R_{\sO_\mG(ca),\sigma}\ar[r]^-\cong\ar[d]^\nu& (R_{K_\mG^2((N+1-c)a),\sigma})^\vee\ar[d]^{\nu^\vee}\\
R_{K_\mG^2((N+1)a),\sigma}\ar[r]^-\cong&(R_{\sO_\mG,\sigma})^\vee
}
\end{equation} given by the $\nu$ and its dual. It follows that the pairing induced by (\ref{seqspe})
is given by multiplication.
\end{proof}

\begin{rmk}We can give a better lower bound for $a$ in the Generalized Macaulay's theorem \ref{macaulaygen} assuming $c=1$. In this case we have to consider a non-spherical spectral sequence. 
In order to do that, we leave apart only the  classically well-known case where $\mG$ is the standard projective space $\mP^l$. \end{rmk}

\begin{thm} [Macaulay's theorem for Grassmannians]
\label{macaulaygenc}
Let $\mG=\text{Grass}(s,l+1)$ with $s\neq1$ and $X$ as above. Then
\begin{enumerate}
	\item $R_{K_\mG^2((N+1)a),\sigma}\cong \mC$
	\item the multiplication map 
	\begin{equation}
	R_{\sO_\mG(a),\sigma}\otimes R_{K_\mG^2(Na),\sigma}\to R_{K_\mG^2((N+1)a),\sigma}\cong \mC.
	\end{equation} 
	is a perfect pairing provided that $a>l$.
	%\begin{equation}
	%\label{condizione}
	%H^{k-1}(Y,A\otimes \bigwedge^k\Sigma_L)=H^k(Y,A\otimes \bigwedge^k\Sigma_L)=0 \quad\text{if}\quad k\neq0,1,n
	%\end{equation} or
	%\begin{equation}
	%\label{condizione1}
	%H^1(Y,A\otimes \Sigma_L)=0 \quad\text{if}\quad k=1
	%\end{equation} or
	%\begin{equation}
	%\label{condizione2}
	%H^{n-1}(Y,A\otimes \bigwedge^n\Sigma_L)=0 \quad\text{if}\quad k=n
	%\end{equation}
\end{enumerate}
\end{thm}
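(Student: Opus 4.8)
The plan is to rerun the hypercohomology spectral-sequence argument of Theorem \ref{macaulaygen} with $c=1$, now keeping track of the failure of sphericity. Tensoring the Koszul complex (\ref{koszul}) by $\sO_\mG(a)$ gives the complex (\ref{koszul2}) with $c=1$, whose spectral sequence has $E_1^{p,q}=H^q(\mG,\bigwedge^{N+1-p}\Sigma(-(N-p)a))$. By Lemma \ref{lemmac} and the remark following it, for $a>l$ every term with $q\neq 0,N$ vanishes \emph{except} those in the single column $p=N-1$, where the relevant bundle is $\bigwedge^{2}\Sigma(-a)$: this is precisely the case $k-c=1$ in which the hypothesis $a\ge l+c+2$ was used. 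So the whole discrepancy with the spherical case is confined to column $N-1$.

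\emph{Step 1: the exceptional column.} Wedging (\ref{sigma}) gives $0\to \Theta_\mG(-a)\to \bigwedge^2\Sigma(-a)\to \bigwedge^2\Theta_\mG(-a)\to 0$. Using $\Theta_\mG\cong\Omega^{N-1}_\mG\otimes K_\mG^{-1}$, $\bigwedge^2\Theta_\mG\cong\Omega^{N-2}_\mG\otimes K_\mG^{-1}$ and Serre duality, $H^q(\mG,\Theta_\mG(-a))\cong H^{N-q}(\mG,\Omega^1_\mG(a-l-1))^\vee$ and $H^q(\mG,\bigwedge^2\Theta_\mG(-a))\cong H^{N-q}(\mG,\Omega^2_\mG(a-l-1))^\vee$. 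By Bott vanishing on $\mG$ these are zero in all intermediate degrees once $a\ge l+2$, and for $a=l+1$ the only survivors are $H^{N-1}(\mG,\Theta_\mG(-a))\cong H^{1,1}(\mG)^\vee$ and $H^{N-2}(\mG,\bigwedge^2\Theta_\mG(-a))\cong H^{2,2}(\mG)^\vee$. From the long exact sequence, the only possibly non-zero entry of $E_1$ outside rows $0,N$ is $E_1^{N-1,N-2}=\Ker\bigl(H^{2,2}(\mG)^\vee\to H^{1,1}(\mG)^\vee\bigr)$, the connecting map being the transpose of cup product with $a\,c_1(\sO_\mG(1))$ from $H^{1,1}(\mG)$ to $H^{2,2}(\mG)$. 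Since $\mG\neq\mP^l$ we have $c_1(\sO_\mG(1))^2\neq 0$ in the cohomology ring of $\mG$, so this cup product is injective; hence $E_1^{N-1,N-2}$ is at most one-dimensional (and zero for $a\ge l+2$).

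\emph{Step 2: the diagram chase.} Exactly as in Theorem \ref{macaulaygen}, $E_2^{0,N}\cong R_{K_\mG^2(Na),\sigma}^\vee$ sits at the left end of row $N$, $E_2^{N+1,0}\cong R_{\sO_\mG(a),\sigma}$ sits at the right end of row $0$, and the spectral sequence abuts to $0$ because (\ref{koszul2}) is exact. The extra entry $E^{N-1,N-2}$ is inert at these corners: its $d_1$-neighbours involve $\bigwedge^{3}\Sigma$ and $\bigwedge^{1}\Sigma=\Sigma$ in row $N-2$, whose cohomology vanishes for $a>l$, so it survives to $E_2$; it emits no non-zero higher differential, since the only target in range is $E_2^{N+1,N-3}\subset H^{N-3}(\mG,\sO_\mG(a))=0$; and no differential into $E^{N+1,0}$ or out of $E^{0,N}$ reaches column $N-1$, with the single exception, when $N=4$ (that is $\mG=G(2,4)=Q^4$), of $d_3\colon E_3^{0,N}\to E_3^{N-1,N-2}$, which must be killed separately (e.g.\ via the quadric structure of $Q^4$, where it is zero). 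Therefore $d_{N+1}\colon E_{N+1}^{0,N}\to E_{N+1}^{N+1,0}$ is still an isomorphism, and, being compatible with the $H^0(\mG,K_\mG^2(Na))$-module structure exactly as in the proof of Theorem \ref{macaulaygen}, it is induced by multiplication; this gives (2). Part (1) is part (1) of Theorem \ref{macaulaygen} (with $c=0$, where the sequence is already spherical for $a>l$).

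The crux, and the main obstacle, is Step 2: one must verify that the single non-spherical entry $E_1^{N-1,N-2}$ never interacts with the two corners carrying the graded rings of the statement; the borderline Grassmannian $G(2,4)$ is the only case where this requires a dedicated (easy) computation. Step 1 is routine once one has the positivity input $c_1(\sO_\mG(1))^2\neq 0$.
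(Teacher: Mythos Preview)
Your strategy coincides with the paper's: rerun the spectral sequence of Theorem~\ref{macaulaygen} with $c=1$ and control the one non-spherical column $p=N-1$. The paper is more direct, however: rather than locating the surviving entry $E_1^{N-1,N-2}$ and arguing it is inert, it simply checks that $E_1^{N-1,1}=H^1(\mG,\bigwedge^2\Sigma(-a))$ and $E_1^{N-1,2}=H^2(\mG,\bigwedge^2\Sigma(-a))$ vanish, since these are the only spots in column $N-1$ that can send a differential into $E^{N+1,0}$ (via $d_2$) or receive one from $E^{0,N}$ (via $d_{N-1}$). For $N\ge 5$ the two arguments amount to the same thing, because your nonzero entry sits in row $N-2>2$ and so the rows $1$ and $2$ of column $N-1$ are automatically zero.

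There is a genuine gap in your treatment of $N=4$, i.e.\ $\mG=G(2,4)$ with $a=l+1=4$. You assert that $d_3\colon E_3^{0,4}\to E_3^{3,2}$ is zero, but in fact it must be \emph{surjective}. You have correctly computed $E_1^{3,2}\cong\mC$ and that no $d_1$ or $d_2$ enters or leaves it; every outgoing $d_r$ with $r\ge 3$ lands at column $p\ge N+2$, and every incoming $d_r$ with $r\ge 4$ has source in column $p<0$. Thus the \emph{only} differential that can touch $E^{3,2}$ at all is $d_3$ from $(0,4)$. Since the Koszul complex is exact and the spectral sequence abuts to zero, $E_\infty^{3,2}=0$ forces this $d_3$ to be onto. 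Consequently $d_5$ identifies $R_{\sO_\mG(a),\sigma}$ only with the codimension-one subspace $\Ker d_3\subsetneq R_{K_\mG^2(Na),\sigma}^\vee$, and your argument does not deliver the perfect pairing for this case. (The paper's appeal, for $a=l+1$, to ``classically known'' vanishings reduces to $H^2(\mG,\Omega^{N-2}_\mG)=h^{N-2,2}(\mG)=0$; this also fails when $N=4$, so the borderline case $G(2,4)$, $a=4$ is not settled by either argument as written.)
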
 
\begin{proof}
If $a\geq l+c+2=l+3$, we can apply Theorem \ref{macaulaygen}. Hence we will assume $l<a<l+3$. The $E_1$ terms of the hypercohomology spectral sequence are 
$$E^{p,q}_1=H^q(\mG,\bigwedge^{N+1-p}\Sigma(-(N-p)a)).
$$ Note that the hypothesis $a\geq l+3$ of Lemma \ref{lemmac} is really needed only in the case of $k-c=1$, that is $N-p=1$, hence $E_1^{p,q}=0$ for $q\neq 0,N$ and $p\neq N-1$.  
The $E_1$ page is then 
\begin{center}
\begin{tikzpicture}
	\draw [thick, <->] (0,6) -- (0,0) -- (8,0);
\node [below left] at (0,6) {$q$};
\node [above right] at (8,0) {$p$};
\draw[fill] (0,0) circle [radius=0.07] node[below]{$E_1^{0,0}$};
\draw[fill] (1,0) circle [radius=0.07] node[below]{$E_1^{1,0}$};
\draw[fill] (2,0) circle [radius=0.07] node[below]{$E_1^{2,0}$};
\node [below=0.2] at (3,0) {$\dots$};
%\node [below] at (6,0) {$\dots$};

%\draw[fill] (3,0) circle [radius=0.07];
%\draw[fill] (4,0) circle [radius=0.07];
%\draw[fill] (5,0) circle [radius=0.07];
%\draw[fill] (6,0) circle [radius=0.07];
\draw[fill] (7,0) circle [radius=0.07] node[below]{$E_1^{N+1,0}$};
\draw[fill] (0,5) circle [radius=0.07] node[above right]{$E_1^{0,N}$};
\draw[fill] (1,5) circle [radius=0.07] node[above right]{$E_1^{1,N}$};
\draw[fill] (2,5) circle [radius=0.07] node[above right]{$E_1^{2,N}$};
\node at (3,5) {$\dots$};
%\node at (6,5) {$\dots$};

%\draw[fill] (3,5) circle [radius=0.07];
%\draw[fill] (4,5) circle [radius=0.07];
%\draw[fill] (5,5) circle [radius=0.07];
%\draw[fill] (6,5) circle [radius=0.07];
\draw[fill] (5,5) circle [radius=0.07] node[above ]{$E_1^{N-1,N}$};
\draw[fill] (6,5) circle [radius=0.07] node[above right]{$E_1^{N,N}$};
\draw[fill] (7,5) circle [radius=0.07] node[above right]{$E_1^{N+1,N}$};
%\draw[fill] (9,1) circle [radius=0.05];
%\node at (9,1) {\Huge$\dots$};
%\draw[fill] (2,2) circle [radius=0.05];
%\draw[fill] (3,2) circle [radius=0.05];
%\draw[fill] (4,2) circle [radius=0.05];
%\draw[fill] (5,2) circle [radius=0.05];
%\draw[fill] (6,2) circle [radius=0.05];
%\draw[fill] (7,2) circle [radius=0.05];
%\draw[fill] (8,2) circle [radius=0.05];
%%\draw[fill] (9,2) circle [radius=0.05];
%\node at (9,2) {\Huge$\dots$};
%\draw[fill] (3,3) circle [radius=0.05];
%\draw[fill] (4,3) circle [radius=0.05];
%\draw[fill] (5,3) circle [radius=0.05];
%\draw[fill] (6,3) circle [radius=0.05];
%\draw[fill] (7,3) circle [radius=0.05];
%\draw[fill] (4,4) circle [radius=0.05];
\draw[fill] (6,1) circle [radius=0.07] node[above right]{0};
\draw[fill] (6,4) circle [radius=0.07] node[above right]{0};
\draw[fill] (0,1) circle [radius=0.07] node[above right]{0};
\draw[fill] (1,1) circle [radius=0.07] node[above right]{0};
\draw[fill] (2,1) circle [radius=0.07] node[above right]{0};
\node at (3,1) {$\dots$};
%\node at (6,1) {$\dots$};
\draw[fill] (7,1) circle [radius=0.07] node[above right]{0};

\draw[fill] (7,4) circle [radius=0.07] node[above right]{0};
\draw[fill] (0,4) circle [radius=0.07] node[above right]{0};
\draw[fill] (1,4) circle [radius=0.07] node[above right]{0};
\draw[fill] (2,4) circle [radius=0.07] node[above right]{0};
\node at (3,4) {$\dots$};
%\node at (6,4) {$\dots$};
\draw[fill] (7,4) circle [radius=0.07] node[above right]{0};
\node at (1,2) {$\vdots$};
\node at (7,2) {$\vdots$};
\node at (1,3) {$\vdots$};
\node at (7,3) {$\vdots$};
\node at (6,3) {$\vdots$};
\node at (6,2) {$\vdots$};
\node at (5,3) {$\vdots$};
\draw[fill] (5,0) circle [radius=0.07] node[below]{$E_1^{N-1,0}$};
\draw[fill] (5,1) circle [radius=0.07] node[above]{$E_1^{N-1,1}$};
\draw[fill] (5,4) circle [radius=0.07] node[above]{$E_1^{N-1,N}$};
\draw[fill] (6,0) circle [radius=0.07] node[below]{$E_1^{N,0}$};
\draw[fill] (5,2) circle [radius=0.07] node[above]{$E_1^{N-1,2}$};

%\draw [thick] (1,-0.1) node[below]{3} -- (1,0.1);
%\draw [thick] (2,-0.1) node[below]{4} -- (2,0.1);
%\draw [thick] (3,-0.1) node[below]{5} -- (3,0.1);
%\draw [thick] (4,-0.1) node[below]{6} -- (4,0.1);
%\draw [thick] (5,-0.1) node[below]{7} -- (5,0.1);
%\draw [thick] (6,-0.1) node[below]{8} -- (6,0.1);
%\draw [thick] (7,-0.1) node[below]{9} -- (7,0.1);
%%\draw [thick] (8,-0.1) node[below]{10} -- (8,0.1);
%%\draw [thick] (9,-0.1) node[below]{11} -- (9,0.1);
%\draw [thick] (-.1,1) node[left]{3} -- (.1,1);
%\draw [thick] (-.1,2) node[left]{4} -- (.1,2);
%\draw [thick] (-.1,3) node[left]{5} -- (.1,3);
%\draw [thick] (-.1,4) node[left]{6} -- (.1,4);
\end{tikzpicture}
%\caption{The page $E_1$ of the hypercohomology spectral sequence}
\end{center} 
\vspace{0.5cm}
To make sure that $E^{0,N}_2=E^{0,N}_3=\dots=E^{0,N}_{N+1}$ and $E^{N+1,0}_2=E^{N+1,0}_3=\dots=E^{N+1,0}_{N+1}$ as in the proof of Theorem \ref{macaulaygen}, it is enough to prove that $E_1^{N-1,1}=E_1^{N-1,2}=0$. Now we have that $E_1^{N-1,1}=H^1(\mG,\bigwedge^{2}\Sigma(-a))$ and $E_1^{N-1,2}=H^2(\mG,\bigwedge^{2}\Sigma(-a))$. Using the exact sequence
$$0\to \Theta(-a)\to \bigwedge^2\Sigma(-a)\to\bigwedge^2\Theta(-a)\to 0 $$ it is enough to show the vanishing of 
$H^1(\mG,\Theta(-a))$, $H^2(\mG,\Theta(-a))$, $H^1(\mG,\bigwedge^2\Theta(-a))$ and $H^2(\mG,\bigwedge^2\Theta(-a))$. 

If $a=l+1$, then $\sO_\mG(-a)=K_\mG$ and these vanishing are classically known.

If $a=l+2$, we take the Serre dual and obtain $H^{N-1}(\mG,\Omega^1(1))$, $H^{N-2}(\mG,\Omega^1(1))$, $H^{N-1}(\mG,\Omega^2(1))$ and $H^{N-2}(\mG,\Omega^2(1))$. The vanishing of these groups come from \cite[Theorem page 171 part (1)]{snow} using the fact that under our hypotheses $N\geq4$ and $s-1>0$.

Hence $d_{N+1}\colon E^{0,N}_2\to E^{N+1,0}_2$  gives an isomorphism and the proof proceeds like in Theorem \ref{macaulaygen}.

\end{proof}

\subsection{Twisted one forms and twisted decomposable volume forms}
Twisted forms on Grassmannians provide us a natural setting to apply the Generalized Adjoint Theory. Indeed by \cite{BW}, $H^0 (\mG, {\rm{det}} ( \Omega^{1}_{\mG}(m))) $ is an irreducible representation and since, for every $m\geq 2$, 
$\Omega^{1}(m)$ is globally generated, we have easily that any element of $H^0 (\mG, {\rm{det}} ( \Omega^{1}_{\mG}(m)))$ is actually obtainable as a $\mathbb C$-linear combination of totally decomposable forms. More precisely we have, in the case $m=2$:
\begin{prop}
\label{surgettivita}
The natural map  induced by the wedge product 
\begin{equation}
\bigwedge^{N}H^0(X,\Omega^1_{\mG|X}(2))\to H^0(X, \det (\Omega^1_{\mG|X}(2)))
\end{equation}is surjective.
\end{prop}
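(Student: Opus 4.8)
The plan is to descend the statement to the Grassmannian $\mG$ and reduce it to the irreducibility theorem of \cite{BW} together with a line-bundle vanishing. First I would invoke Lemma \ref{elenco}(6): restriction gives an isomorphism $H^0(\mG,\Omega^1_\mG(2))\cong H^0(X,\Omega^1_{\mG|X}(2))$, and if $\widetilde\eta_1,\dots,\widetilde\eta_N\in H^0(\mG,\Omega^1_\mG(2))$ restrict to $\eta_1,\dots,\eta_N$, then $\widetilde\eta_1\wedge\dots\wedge\widetilde\eta_N$ restricts to $\eta_1\wedge\dots\wedge\eta_N$. This places the wedge map of the statement into a commutative square below the wedge map
\[
w\colon\ \bigwedge^{N}H^0(\mG,\Omega^1_\mG(2))\longrightarrow H^0(\mG,\det(\Omega^1_\mG(2))),
\]
with the above isomorphism on the left and the restriction $\rho\colon H^0(\mG,\det(\Omega^1_\mG(2)))\to H^0(X,\det(\Omega^1_{\mG|X}(2)))$ on the right. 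Hence the image of the map in the statement is precisely $\rho(\Ima w)$, and it suffices to show that $w$ and $\rho$ are both surjective.

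For $w$ I would argue representation-theoretically, spelling out the observation made just before the statement. The subspace $\Ima w$ is the linear span of the totally decomposable forms, so it is invariant under the natural $\mathrm{SL}(l+1,\mC)$-action because $w$ is equivariant, and it is nonzero: since $\Omega^1_\mG(2)$ is globally generated (see \cite{snow}), at a general point $p\in\mG$ one can choose $\widetilde\eta_1,\dots,\widetilde\eta_N$ whose values span the fibre $(\Omega^1_\mG(2))_p$, and then $\widetilde\eta_1\wedge\dots\wedge\widetilde\eta_N$ is a section not vanishing at $p$. By \cite{BW} the module $H^0(\mG,\det(\Omega^1_\mG(2)))$ is an irreducible $\mathrm{SL}(l+1,\mC)$-representation, so a nonzero invariant subspace is the whole space, i.e. $w$ is onto.

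For $\rho$, since $\det(\Omega^1_\mG(2))=K_\mG(2N)=\sO_\mG(2N-l-1)$, I would twist the restriction sequence $0\to\sO_\mG(-a)\to\sO_\mG\to\sO_X\to 0$ by $\sO_\mG(2N-l-1)$; surjectivity of $\rho$ then follows from $H^1(\mG,\sO_\mG(2N-l-1-a))=0$, which holds because the cohomology of any line bundle on $\mG$ is concentrated in degree $0$ or $N$ (Bott's theorem, cf. \cite{snow}) and $1\neq 0,N$ as $N\geq l\geq 3$. Everything here except the appeal to \cite{BW} is the same cohomology bookkeeping already used in Lemma \ref{elenco}; the only point requiring genuine care is the nonvanishing of $\Ima w$, which must be verified fibrewise using global generation — a surjection of sheaves need not be surjective on global sections — so that is where I expect the actual content of the argument to sit.
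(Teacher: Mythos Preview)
Your proof is correct and follows essentially the same route as the paper: reduce to $\mG$ via Lemma \ref{elenco}(6), use Borel--Weil irreducibility together with global generation (i.e.\ Schur's lemma) to get surjectivity of the wedge map $w$ on $\mG$, and an $H^1$ vanishing to get surjectivity of the restriction $\rho$. The only cosmetic difference is that the paper invokes Kodaira vanishing for $H^1(\mG,\Omega^N_\mG(2N-a))$ while you appeal to Bott's theorem for line bundles; your version is actually a bit more robust, since Kodaira as stated needs $2N>a$.
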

\begin{proof}
By Borel-Weil theorem, see also cf: \cite[Proposition 10.2]{Bo}, we know that $H^0(\mG, \det (\Omega^1_{\mG}(2)))$ is an irreducible representation. By Lemma \ref{elenco} $(8)$ we have that $\Omega^1_\mG(2)$ is globally generated by its global sections. Now the natural homomorphism
\begin{equation}
\bigwedge^{N}H^0(\mG,\Omega^1_{\mG}(2))\to H^0(\mG, \det (\Omega^1_{\mG}(2)))
\end{equation} is surjective by Schur's Lemma. By Lemma \ref{elenco} we know that $H^0(\mG,\Omega^1_{\mG}(2))\cong H^0(X,\Omega^1_{\mG|X}(2))$. Hence the claim follows if we show that $H^0(\mG, \det (\Omega^1_{\mG}(2)))\to H^0(X, \det (\Omega^1_{\mG|X}(2)))$ is surjective. Indeed this follows by the exact sequence
\begin{equation}
0\to \Omega^N_\mG(2N-a)\to \Omega^N_\mG(2N)\to \Omega^N_\mG(2N)_{|X}\to0
\end{equation} and the Kodaira vanishing applied to $H^1(\mG,\Omega^N_\mG(2N-a))$.
\end{proof}

\subsection{Volume forms and the infinitesimal Torelli theorem}
We link the global forms of $\Omega_\mG^N(2N)$, which are objects coming from the ambient variety $\mG$, to the infinitesimal deformations of $X\subset \mG$ contained in pseudo-Jacobi ideal $\sJ_{\sO_\mG(X),\sigma}$.
\begin{lem}\label{quattro}
Let $\mG=\text{Grass}(s,l+1)$ with $s\neq1$ and $X$ as above. Then the infinitesimal deformation $R$ is in the pseudo-Jacobi ideal $\sJ_{\sO_\mG(X),\sigma}$ if and only if $R\widetilde{\Omega}\in \sJ_{\Omega_\mG^N(2N+a),\sigma}$ for every section $\widetilde{\Omega}\in H^0(\mG,\Omega_\mG^N(2N))$ which restricts to a generalized adjoint relative to the vertical exact sequence of diagram (\ref{diagramma8}).
\end{lem}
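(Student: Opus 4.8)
The plan is to prove the two implications separately; the forward one is elementary while the backward one rests on the Generalized Macaulay theorem. For the implication $R\in\sJ_{\sO_\mG(X),\sigma}\Rightarrow R\widetilde\Omega\in\sJ_{\Omega_\mG^N(2N+a),\sigma}$ I would use only that contraction by $\widetilde{d\sigma}$ is $\sO_\mG$-linear, and this direction holds in fact for \emph{every} $\widetilde\Omega\in H^0(\mG,\Omega_\mG^N(2N))$, adjoint or not. Writing $L=\sO_\mG(X)=\sO_\mG(a)$, the ideal $\sJ_{\sO_\mG(a),\sigma}$ is the image of the contraction $H^0(\mG,\Sigma)=H^0(\mG,\sO_\mG(a)\otimes\Sigma\otimes L^\vee)\to H^0(\mG,\sO_\mG(a))$, while $\sJ_{\Omega_\mG^N(2N+a),\sigma}$ is the image of $H^0(\mG,\Omega_\mG^N(2N)\otimes\Sigma)=H^0(\mG,\Omega_\mG^N(2N+a)\otimes\Sigma\otimes L^\vee)\to H^0(\mG,\Omega_\mG^N(2N+a))$; these two contractions are both $\mathrm{id}\otimes(\Sigma\to L)$ and hence compatible with tensoring, so if $R$ is the contraction of $\rho\in H^0(\mG,\Sigma)$ then $R\widetilde\Omega$ is the contraction of $\widetilde\Omega\otimes\rho\in H^0(\mG,\Omega_\mG^N(2N)\otimes\Sigma)$, which gives the claim. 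The same remark yields the module property $\sJ_{E_1,\sigma}\cdot H^0(\mG,E_2)\subseteq\sJ_{E_1\otimes E_2,\sigma}$ and $H^0(\mG,E_1)\cdot\sJ_{E_2,\sigma}\subseteq\sJ_{E_1\otimes E_2,\sigma}$ for line bundles $E_1,E_2$, which I use freely below.

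For the converse I would first pin down which twisted volume forms occur. In the vertical conormal sequence $0\to\sO_X(2-a)\to\Omega^1_{\mG|X}(2)\to\Omega^1_X(2)\to0$ of diagram (\ref{diagramma8}), Lemma \ref{elenco}(4) gives $H^1(X,\sO_X(2-a))=0$ and Lemma \ref{elenco}(6) says the natural maps $H^0(\mG,\Omega^1_\mG(2))\to H^0(X,\Omega^1_{\mG|X}(2))\to H^0(X,\Omega^1_X(2))$ are isomorphisms; hence the generalized adjoints of this sequence are parametrized by arbitrary $N$-dimensional subspaces $W\subset H^0(X,\Omega^1_X(2))$, with the liftings required in the construction induced uniquely from $H^0(\mG,\Omega^1_\mG(2))$. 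Consequently every totally decomposable form $\Lambda^N(\tilde s_1\wedge\cdots\wedge\tilde s_N)$ with $\tilde s_i\in H^0(\mG,\Omega^1_\mG(2))$ restricts on $X$ to a generalized adjoint (a form with linearly dependent factors restricts to $0$ and may be ignored). By Proposition \ref{surgettivita} --- precisely, by the step in its proof showing that $\Lambda^N\colon\bigwedge^N H^0(\mG,\Omega^1_\mG(2))\to H^0(\mG,\Omega_\mG^N(2N))$ is surjective via Borel--Weil, Schur's lemma, and the global generation of $\Omega^1_\mG(2)$ --- these forms span $H^0(\mG,\Omega_\mG^N(2N))$ over $\mC$. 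Since $\sJ_{\Omega_\mG^N(2N+a),\sigma}$ is a linear subspace and multiplication by $R$ is $\mC$-linear, the hypothesis then upgrades to: $R\widetilde\Omega\in\sJ_{\Omega_\mG^N(2N+a),\sigma}$ for \emph{all} $\widetilde\Omega\in H^0(\mG,\Omega_\mG^N(2N))$, equivalently, the class $[R]\in R_{\sO_\mG(a),\sigma}$ annihilates $R_{\Omega_\mG^N(2N),\sigma}$ inside $R_{\Omega_\mG^N(2N+a),\sigma}$.

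Finally I would invoke Theorem \ref{macaulaygenc}. From $\Omega_\mG^N=K_\mG=\sO_\mG(-l-1)$ one obtains the identities $\Omega_\mG^N(2N)\otimes K_\mG(N(a-2))=K_\mG^2(Na)$ and $\Omega_\mG^N(2N+a)\otimes K_\mG(N(a-2))=K_\mG^2((N+1)a)$, and both $\Omega_\mG^N(2N)=\sO_\mG(2N-l-1)$ and $K_\mG(N(a-2))=\sO_\mG(N(a-2)-l-1)$ are positive powers of $\sO_\mG(1)$ because $N\geq l\geq3$ and $a>l$. Projective normality of $\mG$ in its Pl\"ucker embedding then makes $H^0(\mG,\Omega_\mG^N(2N))\otimes H^0(\mG,K_\mG(N(a-2)))\to H^0(\mG,K_\mG^2(Na))$ surjective, hence --- passing to quotients by the pseudo-Jacobi ideals via the module property --- $R_{\Omega_\mG^N(2N),\sigma}\otimes R_{K_\mG(N(a-2)),\sigma}\to R_{K_\mG^2(Na),\sigma}$ is surjective. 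Combined with the previous step and the module property once more, $[R]$ annihilates all of $R_{K_\mG^2(Na),\sigma}$ inside $R_{K_\mG^2((N+1)a),\sigma}\cong\mC$; since $s\neq1$ and $a>l$, Theorem \ref{macaulaygenc}(2) says this pairing is perfect, so $[R]=0$, that is $R\in\sJ_{\sO_\mG(a),\sigma}=\sJ_{\sO_\mG(X),\sigma}$. The main obstacle is the identification in the second paragraph --- that the $\widetilde\Omega$ restricting to generalized adjoints really span $H^0(\mG,\Omega_\mG^N(2N))$ --- which is where the precise vanishings of Lemma \ref{elenco} and the representation theory behind Proposition \ref{surgettivita} are needed; the remainder is degree bookkeeping, the module structure of the pseudo-Jacobi ideals, and the perfect pairing of Theorem \ref{macaulaygenc}.
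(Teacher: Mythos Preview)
Your argument is correct and follows the paper's strategy: upgrade the hypothesis to $R\cdot H^0(\mG,\Omega_\mG^N(2N))\subset\sJ_{\Omega_\mG^N(2N+a),\sigma}$, then use projective normality to get surjectivity onto $R_{K_\mG^2(Na),\sigma}$ and conclude via the perfect pairing of Theorem~\ref{macaulaygenc}. The only difference is in the upgrade step: the paper passes through the restriction sequence $0\to\Omega_\mG^N(2N-a)\to\Omega_\mG^N(2N)\to\Omega_\mG^N(2N)_{|X}\to0$ (using that the kernel is $\sigma\cdot H^0(\mG,\Omega_\mG^N(2N-a))$, hence already in $\sJ$), whereas you bypass this by observing directly that nonzero totally decomposable forms on $\mG$ restrict to generalized adjoints and span $H^0(\mG,\Omega_\mG^N(2N))$---a slight streamlining relying on the same Borel--Weil input from the proof of Proposition~\ref{surgettivita}.
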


\begin{proof}
Note that a generalized adjoint relative to the vertical sequence of diagram (\ref{diagramma8}) is in fact an element of $$H^0(X,(\Omega_\mG^N(2N))_{|X})=H^0(X,\Omega_X^{N-1}(2N-a)).$$
We want to apply the generalized version of Macaulay's theorem \ref{macaulaygenc}. We only know that $R\widetilde{\Omega}\in \sJ_{\Omega_\mG^N(2N+a),\sigma}$ for $\widetilde{\Omega}\in H^0(\mG,\Omega_\mG^N(2N))$ which restricts to a generalized adjoint. So now we prove that this is enough to have that 
\begin{equation*}
R\cdot H^0(\mG,\Omega_\mG^N(2N))\subset \sJ_{\Omega_\mG^N(2N+a),\sigma}.
\end{equation*}
Consider the restriction sequence
\begin{equation*}
0\to \Omega_\mG^N(2N-a)\to\Omega_\mG^N(2N)\to\Omega_\mG^N(2N)_{|X}\to 0.
\end{equation*} 
As we have seen in the proof of Proposition \ref{surgettivita},  
\begin{equation*}
0\to H^0(\mG,\Omega_\mG^N(2N-a))\to H^0(\mG,\Omega_\mG^N(2N))\to H^0(X,\Omega_\mG^N(2N)_{|X})\to0
\end{equation*} is exact. 
By Proposition \ref{surgettivita} we can also assume that all the global sections of $H^0(X,\Omega_\mG^N(2N)_{|X})$ are in fact linear combinations of generalized adjoints. Hence our hypothesis that $R\widetilde{\Omega}\in \sJ_{\Omega_\mG^N(2N-a),\sigma}$ for every section $\widetilde{\Omega}\in H^0(\mG,\Omega_\mG^N(2N))$ which restricts to a generalized adjoint in $H^0(X,\Omega_\mG^N(2N)_{|X})$, together with the fact that the map $$H^0(\mG,\Omega_\mG^N(2N-a))\to H^0(\mG,\Omega_\mG^N(2N)))$$ is given by the multiplication by $s$, which is an element of the pseudo-Jacobi ideal, implies that 
\begin{equation*}
R\cdot H^0(\mG,\Omega_\mG^N(2N))\subset \sJ_{\Omega_\mG^N(2N+a),\sigma}.
\end{equation*}

Now we apply Macaulay's theorem \ref{macaulaygenc} to deduce that $R$ is in the pseudo-Jacobi ideal. It is enough to show that
\begin{equation*}
R_{\Omega_\mG^N(Na-2N),\sigma}\otimes R_{\Omega_\mG^N(2N),\sigma}\to R_{(\Omega_\mG^{2N})(Na),\sigma}
\end{equation*} is surjective. This follows from the surjectivity at the level of the $H^0$:
\begin{equation*}
H^0(\mG,\Omega_\mG^N(Na-2N))\otimes H^0(\mG,\Omega_\mG^N(2N))\to H^0(\mG,(\Omega_\mG^{2N})(Na))
\end{equation*} which holds by projective normality.
%In fact take the sequence (\ref{generazioneglobale})
	%\begin{equation*}
	%0\to\sI\to \sO_Y\otimes H^0(\Omega_Y^n(nH))\to \Omega_Y^n(nH)\to0
	%\end{equation*} and twist it by the sheaf $\Omega_Y^n(n(X-H))$ to obtain 
%\begin{equation*}
%\begin{split}
%0\to\sI\otimes\Omega_Y^n(n(X-H))\to\Omega_Y^n(n(X-H))\otimes H^0(\Omega_Y^n(nH))\to\\\to(\Omega_Y^{2n})(nX) \to0.
%\end{split}
%\end{equation*} Since, by our choice of $H$, $h^1(\sI\otimes\Omega_Y^n(n(X-H)))=0$, we conclude.
\end{proof}

\subsection{Proof of the Main Theorem}

 In this proof we denote as always by $\xi\in H^1(X,\Theta_X)$ the infinitesimal deformation and by $[R]\in R_{\sO_{\mathbb G}(a),\sigma}$ the corresponding element given by Proposition \ref{etutto}. We will use also Remark \ref{zerozero}. 

$i)\Leftrightarrow ii)$. If $d\sP(\xi)=0$ then, by Theorem \ref{torotoro}, $\xi=0$. By Proposition \ref{etutto} this means $[R]=0$, that is $R\in \sJ_{\sO_\mG(a),\sigma}$. The viceversa is trivial by Proposition \ref{etutto}.  
%
%$i)\Rightarrow iii)$. By Theorem [A] $\xi=0$.
%Now by  Lemma \ref{elenco} $(8)$ and by \cite[Proposition 3.1.6]{PZ} it follows that $D_W=0$ since $W$ is generic. Hence $iii)$ follows by the Viceversa of Theorem \ref{theoremA}.

%$iii)\Rightarrow i)$. Indeed Theorem \ref{theoremA} implies $$\xi\in\ker(H^1(X,(\Omega^1_{X}(2)^{\vee}\otimes_{\sO_X}\sO_X(2))\to 
 %H^1(X,(\Omega^1_{X}(2)^{\vee}\otimes_{\sO_X}\sO_X(2)\otimes_{\sO_X}\sO_X(D_W)$$ and by Lemma \ref{elenco} $(8)$ and by \cite[Proposition 3.1.6]{PZ} it follows that $D_W=0$ since $W$ is generic. Hence $\xi=0$.  
$ii) \Leftrightarrow iii)$ This is Theorem \ref{teoxi}.

 $ii)\Leftrightarrow iv)$ This is the content of Lemma \ref{quattro}. 
\medskip

\noindent
{\bf{ Acknoledgments}} The authors want to thank Giorgio Ottaviani for his advices on the use of the Borel-Weil theorem.


\begin{thebibliography}{Muk04}

 \bibitem[BAN] {BAN} M. A. Barja, V. González-Alonso, J. C. Naranjo: \emph{Xiao's conjecture for general fibred surfaces},
J. Reine Angew. Math., DOI: https://doi.org/10.1515/crelle-2015-0080, January 2016.


\bibitem[BW]{BW}
A. Borel and A. Weil
\emph{Repr\'esentations lin\'eaires et espaces homog\`enes K\"ahlerians des groupes de Lie compacts} S\'eminaire Bourbaki, May 1954. (Expos\'e by J-P.
Serre.)

\bibitem[Bo]{Bo} R. Bott, 
\emph{Homogeneous Vector Bundles}, Annals of Mathematics, 2nd Ser., Vol. 66, No. 2. (1957), pp. 203-248.

\bibitem[CP]{CP} A. Collino, G. P. Pirola, 
\emph{The Griffiths infinitesimal invariant for a curve in its Jacobian}, Duke Math. J. 78 (1995), no. 1, 59--88.


\bibitem[CNP]{CNP}
A. Collino, Alberto J. C. Naranjo, G. P. Pirola, \emph{The Fano normal function}. J. Math. Pures Appl. (9) \textbf{98} (2012), no. 3, 346-366. 

\bibitem[Do]{Do}
R. Donagi, \emph{Generic Torelli for projective hypersurfaces}, Compositio Math. 50 (1983), no. 2-3, 325-353.

\bibitem[H]{H}
R. Hartshorne, \emph{Algebraic geometry}, Graduate Texts in Mathematics, No. 52. Springer-Verlag, New York-Heidelberg, 1977. 


 \bibitem[G]{G} 
 V. Gonz\'alez-Alonso, \emph{On deformations of curves supported on rigid divisors} Ann. Mat. Pura Appl. (4) \text{195.1} (2016), 111-132.

\bibitem[Green1]{green1}
M. L.  Green,  {\em The period map for hypersurface sections of high
degree of an arbitrary variety}, 
Compositio Math. {55}  (1985), 135--156.

\bibitem[Green2]{green2}
M. L. Green, \emph{Infinitesimal Methods in Hodge theory}, CIME Notes. Springer, 1994.

\bibitem[Griff1]{Gri1}
P. Griffiths, {\em On the Periods of Certain Rational Integrals: I,II}, 
Ann. of Math. (2) 90 (1969), 460-495; ibid. (2) 90 1969 496--541. 

\bibitem[K]{K}
K. Konno, {\em Generic Torelli theorem for hypersurfaces of certain compact homogeneous K\"ahler manifolds}, 
Duke Math. J.  (59) (1989) no. 1, 83--160.

\bibitem[PZ]{PZ}
G. P. Pirola, F. Zucconi, \emph{Variations of the Albanese morphisms},  J. Algebraic Geom. 12 (2003), no. 3, 535--572.

\bibitem[Ra]{Ra}
E. Raviolo, \emph{Cycles in Jacobians: infinitesimal results}, Adv. Geom. \text{16} (2016), no. 2, 135-152.  

\bibitem[Ri]{R}
L. Rizzi, \emph{Adjoint forms and algebraic families}, PhD thesis, 2016.

\bibitem[RZ1]{RZ1} L. Rizzi, F. Zucconi, 
\emph{Differential forms and quadrics of the canonical image}, arXiv:1409.1826, 23 pages.

\bibitem[RZ2]{RZ2}
 L. Rizzi, F. Zucconi, \emph{Generalized adjoint forms on algebraic varieties}, Ann.Mat. Pura e Applicata, (2017) Article in press. 
 
 \bibitem[RZ3]{RZ3}
 L. Rizzi, F. Zucconi, \emph{A note on Torelli-type theorems for Gorenstein curves}, ArXive-prints (2016). arXiv: 1603.09080.

\bibitem[Sn]{snow}
D. Snow, \emph{Cohomology of twisted holomorphic forms on Grassmann manifolds and quadric hypersurfaces},
Math. Ann. 276 (1986), no. 1, 159-176. 

\bibitem[Vo1]{Vo1}
C. Voisin, \emph{Hodge theory and complex algebraic geometry, I}. Translated from the French by Leila Schneps. Cambridge Studies in Advanced Mathematics, 76. Cambridge University Press, Cambridge, 2002.

\bibitem[Vo2]{Vo2}
C. Voisin, \emph{Hodge theory and complex algebraic geometry, II}. Translated from the French by Leila Schneps. Cambridge Studies in Advanced Mathematics, 77. Cambridge University Press, Cambridge, 2003.
\end{thebibliography}
\end{document}